\documentclass[11pt]{amsart}

\usepackage{graphicx}
\usepackage{xcolor}
\usepackage{amsfonts}
\usepackage{amsmath}
\usepackage{amssymb}
\usepackage{mathrsfs}
\usepackage{enumerate}
\usepackage[margin=2cm]{geometry}
\usepackage{url}
\usepackage[hidelinks]{hyperref}
\usepackage{comment}
\usepackage{cite}

\DeclareMathOperator*{\ddim}{dim}

\DeclareMathOperator*{\psh}{PSH}
\DeclareMathOperator*{\ddc}{dd^c}
\DeclareMathOperator*{\Vdm}{Vdm}

\DeclareMathOperator*{\sign}{\mathrm{sgn}}
\DeclareMathOperator*{\trace}{trace}
\DeclareMathOperator*{\Span}{span}

\newcommand{\R}{\mathbb R}
\newcommand{\N}{\mathbb N}
\newcommand{\C}{\mathbb C}

\newcommand{\pu}[1]{\mathscr{P}_{#1,n}U}
\newcommand{\puw}[1]{\mathscr{P}_{#1,n}^wU}

\newcommand{\ddcn}[1]{\left(\ddc #1\right)^n}
\newcommand{\sgn}[1]{\sign\left(#1\right)}
\newcommand{\MA}{Monge-Amp\`{e}re }

\newtheorem{theorem}{Theorem}[section]
\newtheorem{conjecture}{Conjecture}[section]
\newtheorem{proposition}{Proposition}[section]
\newtheorem{lemma}{Lemma}[section]
\newtheorem*{openproblem*}{Open problem}

\theoremstyle{definition}

\newtheorem{corollary}[theorem]{Corollary}

\theoremstyle{remark}
\newtheorem{remark}[theorem]{Remark}

\newcommand{\rev}[1]{#1}


\title[]{A pluripotential theoretic framework for polynomial interpolation of vector-valued functions and differential forms}

\author{Ludovico Bruni Bruno}
\address{Dipartimento di Matematica \textquotedblleft\emph{Tullio Levi-Civita}\textquotedblright, University of Padova, Via Trieste 63, Padova, Italia}
\email[L. Bruni]{bruni@math.unipd.it}

\author{Federico Piazzon}
\email[F. Piazzon]{fpiazzon@math.unipd.it}

\begin{document}
\begin{abstract}
We consider the problem of uniform interpolation of functions with values in a complex inner product space of finite dimension. This problem can be casted within a modified weighted pluripotential theoretic framework. Indeed, in the proposed modification a vector valued weight is considered, allowing to partially extend the main asymptotic results holding for  interpolation of scalar valued functions to the case of vector valued ones. As motivating example and main application we specialize our results to interpolation of differential forms by differential forms with polynomial coefficients. 
\end{abstract}
\maketitle
\begin{center}
\emph{The present work is dedicated to Len Bos in the occasion of his retirement}
\end{center}
\section{Introduction}

\subsection{Weighted pluripotential theory and polynomial approximation of functions}\label{sec:weightppt}
During the last few decades it \rev{became} clear that weighted pluripotential theory \cite{BlLe03} offers the correct \rev{framework} to understand asymptotic features of optimal polynomial interpolation and approximation of functions. This is demonstrated by a number of works concerning, e.g., polynomial inequalities \cite{BaBi14,BaBi13,Pi17,BlLePiWi15}, approximation  schemes \cite{Le06,BlBoCaLe12}, orthogonal polynomials \cite{Bl97,Pi19}, zeroes of random polynomials and random arrays \cite{BeR19,BaBlLeLu19,ZeZe10}, and experimental design \cite{BlBoLeWa10}, that relate pluripotential theory and approximation theory, as much as the number of numerical schemes derived from heuristics closely related to pluripotential theoretic results \cite{BoPiVi20,BoCaLeSoVi11,BoDeSoVi11}. 

In its essence, pluripotential theory (see \cite{KlM91} for an extensive treatment of the subject) is the study plurisubharmonic functions and of the complex \MA operator $\ddcn{\cdot}$. Plurisubharmonicity,  a property playing a pivotal role in whole complex analysis, is defined as the combination of upper semicontinuity (upc, for short), and subharmonicity along any one-dimensional affine subvariety of the considered domain. The complex \MA operator acting on a twice differentiable function $u\in \mathcal C^2(\Omega)$ of a domain $\Omega\subset \C^n$ is a constant multiple of the determinant of the complex Hessian matrix, i.e., $\det \partial\bar\partial u$, \rev{while it has been extended as} positive measure valued operator acting  on any locally bounded plurisubharmonic function $u\in \psh(\Omega)\cap L^\infty(\Omega)$; \cite{BeTa82}.

Weighted pluripotential theory is a generalization of pluripotential theory in which a positive weight function is considered. Historically (and particularly in the one-dimensional case), this generalization originated from the study of \rev{varying weight orthogonal polynomials}, and developed mainly for the purpose of dealing with regions which are not the complement of a compact set. Surprisingly, the weighted theory turned out to be the right formalism for solving long standing open problems steaming from the unweighted case; \rev{in particular, the well known problem of finding the asymptotic behavior of the Fekete points.}

Here we recall some definitions and results that will come into the play of our construction. Let $K\subset \C^n$ be a compact set and $Q:K\rightarrow \R$ be (for simplicity) a continuous function. We will denote by $w:K\rightarrow \R_+$ the function $\exp(-Q)$; \rev{$Q$ and $ w $ will be both referred to as weight functions, the context indicating which one is meant.} 
We can form the upper envelope
$$V_{K,Q}(z):=\sup\{u\in \mathcal L(\C^n): u\leq Q\text{ on }K\},$$
where $\mathcal L(\C^n)$ is the Lelong class of plurisubharmonic functions of at most logarithmic growth, i.e., $\mathcal L(\C^n):=\{u\in \psh(\C^n):u-\log\|z\|<C\text{  as }\|z\|\to +\infty\}.$ The function $V_{K,Q}$ is not in general plurisubharmonic because it may fail to be upper semicontinuous 
.  It is then considered its upper semicontinuous \rev{regularization}
$$V_{K,Q}^*(z):=\limsup_{\zeta\to z}V_{K,Q}(\zeta).$$
Two opposite scenarios may occour: either $V_{K,Q}^*$ remains locally bounded in $\C^n$, and in such a case it is also plurisubharmonic, or not. In \rev{the} latter case $K$ is termed \emph{pluripolar} because, \rev{roughly} speaking, $K$ is too small from the pluripotential theoretic point of view. Since in such a case the theory that we are going to describe does not apply,  \emph{we will always assume in what follows that $K$ is not pluripolar}. It is a deep result that, provided $K$ is not pluripolar, the function $V_{K,Q}^*$ is a \emph{maximal plurisubharmonic function}. Namely, it satisfies the homogeneous complex \MA $\ddcn{V_{K,Q}^*}=0$ equation in $\C^n\setminus K$. Indeed $(2\pi)^{-n}\ddcn{V_{K,Q}^*}$ is a probability measure supported in $K$, customarily denoted by $\mu_{K,Q}$ and termed \emph{weighted equilibrium measure} of $K$. 

If we aim for uniform convergence of polynomial interpolation of degree $r$ to a given continuous function (holomorphic in the interior of $K$) as the degree $r\to +\infty$, in principle we may try to maximize the modulus of the determinant of the Vandermonde relative to a given basis of $\mathscr P_{r,n}\C$, the space of complex polynomials of $n$ variables with degree at most $r$. This would lead to an approximation scheme based on a sequence of \rev{Lagrange interpolation} operators $I_r:\mathcal C^0(K)\rightarrow \mathscr P_{r,n}\C$ with slowly increasing norms $\|I_r\|\leq \ddim \mathscr P_{r,n}\C=:m_r.$ Triangular arrays $\{x_{1,r},\dots,x_{m_r,r}\}$, $m_r:=\ddim\mathscr P_{r,n}$, of interpolation points constrained on a compact set $K\subset \C^n$ and constructed by solving such a maximization procedure are known as \emph{Fekete points}. \rev{Zaharjuta \cite{Za75,Za12} showed that the sequence of appropriate powers of the maximized moduli of Vandermonde determinants has indeed a limit, called \emph{transfinite diameter} of the set $K$, and custumarily denoted by $\delta(K).$ A similar game can be played with varying weight polynomials, i.e., function of the form $p(x)w^r(x)$, where $p\in \mathscr P_{r,n}\C.$ Under mild assumptions on the weight function $w$ it is possible \cite{BlLe10} to define the \emph{weighted transfinite diameter} of $K$ by setting $\delta^w(K):=\lim_r \delta^{w,r}(K)$ (existence of the limit is stated in \cite[Prop. 2.7]{BlLe10}), where
\begin{equation}
	\delta^{w,r} (K) := \left[ \max_{\boldsymbol x\in K^{m_r}} \Big| \Vdm(x_1,\dots,x_{m_r}) \Big|w(x_1)^r\cdots w^r(x_{m_r}) \right]^{1/(\ell_r)}.
\end{equation}
Here $\Vdm$ denotes the standard Vandermonde determinant $\det [x_i^\alpha(j)]_{i,j=1,\dots, m_r}$, $N=\ddim \mathscr P_{r,n}$, and $\ell_r:=\sum_{j=1}^r[j(\ddim \mathscr P_{j,n}\C-\ddim \mathscr P_{j-1,n}\C)].$ }

\rev{Until the 60s, the interplay of all this quantities was well understood only in the context of $n=1$, but nothing was explained for the case $n>1$.} The celebrated work of Berman, Boucksom, and Nystrom \cite{BeBo10,BeBoNy11} detailed the whole picture by filling the gaps in the puzzle. Here we briefly recall only the part of their results that we will use later on. The first result in \cite{BeBo10} concerns the asymptotics of logarithmic ratios between the Haar volumes of uniform unit balls in the space of weighted polynomials with respect to two different normalizations, i.e. weights $w_1$, $w_2$ and compact sets $K_1$, $K_2$. Indeed, this asymptotic is $\mathcal E(V_{K_1,Q_1}^*,V_{K_2,Q_2}^*)$, the so-called relative \MA energy of $V_{K,Q_1}^*$ with respect to $V_{K,Q_2}^*$, where $Q_i:=-\log w_i.$ Notice that in the case of $K_2$ being the standard polydisk and $w_2\equiv 1$, the ball volume ratio tends to $\log \delta^{w_1}(K_1)$, as the degree of \rev{polynomials} \rev{tends to infinity}, while the weighted extremal function of the polydisk is $\max_j \log^+\|z_j\|$. So one has
\begin{equation}\label{TD-energy}
 - \log \delta^w(K)= \frac{1}{n(2 \pi)^n}\mathcal E(V_{K,Q}^*,\max_j \log^+\|z_j\|).
 \end{equation} 
\rev{This result is expressed by means of uniform norm unit ball volume.} An analog result is then obtained for $L^2_\mu$-balls, whenever $(K,w,\mu)$ satisfies an asymptotic comparability of $L^\infty(K)$ and $L^2_\mu$ norms of weighted polynomials. This hypothesis is known  in the literature as Bernstein Markov property \cite{BlLePiWi15}, and that can be thought as a particular instance of a Nikolski Inequallity. Namely, if we assume  that
\begin{equation}\label{bm-original}
\limsup_{r\to +\infty}\sup_{p\in \mathscr P_{r,n}\C\setminus\{0\}}\left(\frac{\|p w_i^r\|_{K}}{\|p w_i^r\|_{L^2_\mu}}\right)^{1/r}\leq 1,\; i=1,2,
\end{equation}
then the ball volume logarithmic ratio (which is nothing but a multiple of the logarithm of the determinant of a Gram matrix) tends to the aforementioned relative energy. Due to the orthonormality of the monomials on the boundary of the polydisk, one obtains 
\begin{equation}\label{gramasymptotic}
\lim_r \frac{n+1}{2 n r \ddim \mathscr P_{r,n}\C}\log \det G_r^{w}= - \frac{1}{n(2 \pi)^n} \mathcal E(V_{K,Q}^*,\max_j \log^+\|z_j\|)=\log \delta^w(K).
\end{equation}
In the above equation $G_r^{w}$ is the Gram matrix of the scalar product of $L^2_\mu$ written in the basis of weighted monomials of degree at most $r$.

As second step in the construction of \cite{BeBo10,BeBoNy11}, the differentiability of the function 
$$t\mapsto  E(t):=\mathcal E(V_{K,Q+tu}^*,\max_j \log^+\|z_j\|)$$
is proven, and the derivative computed at zero, which gives 
\begin{equation} \label{eq:derEner}
    E'(0)= (n+1) \int_Ku\ddcn{V_{K,Q}^*}.
\end{equation}
Rather surprisingly, a smart use of the concavity of the considered functions \cite[Lemma 6.6]{BeBo10} allows to bring the derivativation with respect to $t$  inside the limit in \eqref{gramasymptotic}. Thus, by direct computations one obtains the \emph{strong Bergman asymptotics}
\begin{equation}\label{sba-original}
\int_K u(x)\frac{B_{w,r}(x,\mu)}N d\mu(x)\to \frac{1}{(2 \pi)^n}\int_K u \ddcn{V_{K,Q}^*},\;\forall u\in \mathscr C^0(K,\R),
\end{equation}
where $B_{w,r}(x;\mu):=\sum_{i=1}^{\ddim \mathscr P_{r,n}\C}|q_i(x)|^2 w^{2r}(x)$ is the diagonal of the reproducing kernel of weighted polynomials of degree at most $r$, i.e., the $q_i$'s form an $L^2_\mu$-orthonormal basis of $\mathscr P_{r,n}\C$.

\rev{Finally, the convergence of (weighted) Fekete points to the (weighted) equilibrium measure $(2\pi)^{-n}\ddcn{V_{K,Q}^*}$} is obtained in \cite{BeBo10} as a particular case of \eqref{sba-original} by noticing that the associated probability measures $\mu^{(r)}$ satisfy the Bernstein Markov property \eqref{bm-original}, and that for any interpolation array one has that the corresponding function $B_{r,w}(\cdot,\mu^{(r)})=N$ (at any interpolation point and hence) $\mu^{(r)}$-a.e, thus
\begin{equation}
\label{Feketeasympt}
\mu^{(r)}\rightharpoonup^*\mu_{K,Q}, \text{ as }r\to+\infty\,,
\end{equation} 
where $\rightharpoonup^*$ denotes weak$^*$ convergence of measures.

\subsection{Our study}
In the present work we focus on polynomial interpolation of functions with values in a complex Hermitian space of dimension $s$, with application to the case of differential forms. In these contexts the point-wise evaluation functionals take values in $\C^s$ instead of simply in $\C$. Therefore, if we aim at studying vector-valued polynomial interpolation,  we first need to construct a \emph{vectorized} pluripotential theoretic framework in which the aforementioned usual definitions, techniques, and results have natural extensions.
We carry out this generalization independently from the application to the space of differential forms, which is detailed in Section \ref{sect:application}.
\subsubsection{Notation and preliminaries} Let $\left(U,(\cdot,\cdot)_U\right)$ be an $s$-dimensional Hermitian space over $\C$, and let $u_1,\ldots,u_s$ be an orthonormal basis of $U$. Let us denote by $U^\R$ the real vector space spanned by $u_1,\ldots,u_s$. We denote by $\pu{r}$ the space of polynomial functions from $\C^n$ to $U$, and, for any $K\subset \C^n$, we denote by $\pu{r}(K)$ the space of polynomial functions from $K$ to $U$. Using the isometric isomorphism of $U\cong \C^s$ induced by the choice of the basis $\{u_1,\ldots,u_s\}$, we can construct a basis for $\pu{r}\cong \mathscr P_{r,n}\C^s$ by considering
\begin{equation}\label{eqn:basisq}
q_j(x):=x^{\beta(j)}u_{s(j)}\longmapsto p_j(x):=x^{\beta(j)}e_{s(j)},\;j=1,\dots,s\times \ddim \mathscr P_{r,n} \C=:s\times m_r=: N.
\end{equation}
\rev{Here we set $j=ls+r$, with $r$ the reminder of the integer division of $j$ by $s$. We introduce $s(j)$ to be $r$ whenever $r\neq 0$ and to take value $s$ if $r=0$. Then we let $x^\alpha=\prod_{i=1}^nx_i^{\alpha_i}$, $e_k$ is the $k$-th element of the canonical basis of $\C^s$, $\beta(j)$ is the $l$-th element of $\N^n$ with respect to the graded lexicographical} ordering. Note that our choice of coordinates and scalar products implies in particular that
$${(p_j(x),p_k(x))}_{\C^s}={(q_j(x),q_k(x))}_{U}=\bar x^{\beta(k)}x^{\beta(j)}(u_{s(j)},u_{s(k)})_U=\bar x^{\beta(k)}x^{\beta(j)}\delta_{s(j),s(k)}\,.$$
The canonical coordinates in $\C^s$ yield the coordinate-wise multiplication $\odot:\C^s\times \C^s\rightarrow \C^s$, with $(y\odot z)_j:=y_jz_j$, for any $j=1,\dots,s.$ Again resting upon the identification $U \cong \C^s$, we can define, by a slight abuse of notation, a binary operation \rev{$\odot: \mathscr C^0(K,U) \times \mathscr C^0(K,U)\rightarrow \mathscr C^0(K,U)$ by setting $a(x)\odot b(x):=\sum_{i=1}^s a_i(x) b_i(x)u_i(x)$, for any $a(x):=\sum_{i=1}^sa_i(x) u_i,$ and $b(x):=\sum_{j=1}^sb_j(x) u_i$ (note that the continuity of involved functions has been assumed only for simplicity).}

Given an $\R^s$ valued continuous weight function $w:K\rightarrow \R_+^s,$ being  $K\subset \C^n$ compact, we denote by $\puw{r}(K)$ the space of varying weight polynomial functions from $K$ to $U$, i.e., functions of the form $q\odot w^r(x):=q(x)\odot(\sum_{j=1}^s w_j(x)^ru_j),$ with $q\in \pu{r}$. Note that in the set up of our notation we are implicitely identifying any such continuous $\R^s$-valued function $w$ with an element of $\mathscr C^0(K,U^\R).$

Clearly one has $\mathscr P_{r,n}^w\C^s\cong \puw{r}$, so that the two spaces can be identified. A basis for $\mathscr P_{r,n}^w\C^s$ is indeed given by
$$\left\{p_j(x)\odot w^r(x), j=1,\dots,N\right\}:=\left\{p_j(x)\odot\overbrace{w(x)\odot\cdots\odot w(x)}^{\text{r times}}, j=1,\dots,N\right\},$$
and the isometrically isomorphic basis for $\puw{r}$ is constructed according to \eqref{eqn:basisq}.

We will think to $\pu{r}(K)$ and $\puw{r}(K)$ as closed subspaces of the Banach space $\left(\mathscr C^0(K,U),\|\cdot\|_{K,U}\right)$ of continuous $U$-valued functions, where 
$$\|\omega\|_{K,U}:=\max_{x\in K}\|\omega(x)\|_U.$$ 
Accordingly, we introduce the Banach space 
$$\mathcal M(K,U):=( \mathscr C^0(K,U))^*$$
of $U$-valued measures on $K$. Due to Riesz' Representation Theorem, each element $T$ of $\mathcal M(K,U)$ may be written as
\begin{equation}\label{eq:Tdef}
T(q):=\int_K {(v(x),q(x))}_U d\mu(x),\;\forall q\in \mathscr C^0(K,U),
\end{equation}
where $\mu$ is a Borel measure on $K$, and $v:K\rightarrow U$ is an $L^2_\mu$ function such that $\|v(x)\|_U=1$ $\mu$-a.e. Apart from the strong convergence in $\mathcal M(K,U)$, defined by the operator norm $\|T\|:=\sup\{|T(q)|,q\in \mathscr C^0(K,U), \|q\|_{K,U}=1\}$, we will also consider weak$^*$ convergence, customarily denoted by $\rightharpoonup^*$. For the reader's convenience we recall that $\{T_j\}$ converges to $T$ weakly$^*$, and we write $T_j\rightharpoonup^* T$, if and only if $\lim_j T_j(q)=T(q)$ for any $q\in \mathscr C^0(K,U).$
\subsubsection{Main results}
In Section \ref{sec:WTD} we extend to the above described vectorial framework the concept of weighted transfinite diameter $\delta^w(K)$ of a compact set $K\subset \C^n$ as the asymptotics of maximal generalized Vandermonde determinants. This new quantity, subsequently denoted by $\delta^w(K,U)$, is characterized in Theorem \ref{thm:MR1} by considering an appropriate geometric mean of the usual transfinite diameters relative to each $u_i$ direction in $\puw{r}$. Since the logarithm function transforms geometric means into arithmetic ones, we can apply all the machinery developed in the scalar case in a component-wise fashion, see Eq. \eqref{TD-energy}. In particular, the derivative of the function $(-\log w_1,\dots,-\log w_s)\mapsto -\log \delta^w(K,U)$ can be related to integral functionals depending on the equilibrium measure of $K$ with respect to each $w_i$, due to \eqref{eq:derEner}.

The Bernstein Markov property is used in the scalar case \rev{to replace} the \textquotedblleft$L^\infty$\textquotedblright -maximization procedure present in the definition of $\delta^w(K)$ by a more manageable \textquotedblleft$L^2$\textquotedblright -maximization. This allows in particular to compare the asymptotics of Gram determinants of so-called Bernstein Markov measures with $\delta^w(K)$, and -- passing to derivatives of the objective with respect to the logarithm of the weight --, to obtain the Bergman asymptotic \eqref{sba-original} and  the convergence of Fekete points \eqref{Feketeasympt}. In Section \ref{sec:BM-Gram} we retrace this construction in the vector valued case. In particular, working with Bernstein Markov vector measures (see Eqn.s \eqref{BM-prop} and \eqref{WBM-prop} later on), we show that appropriate roots of $\log$-determinants of the Gram matrix of $\puw{r}$ with respect to the scalar product induced by a vector Bernstein Markov measure tend to $\log\delta^w(K,U)$; see Proposition \ref{prop:MR2}. 

\rev{In Section \ref{sec:Fekete}, specifically in Proposition \ref{thm:MR4}, we obtain appropriate extensions of convergence of Fekete points. Note that, due to the vectorized framework we work in, usual Fekete points are replaced by higher dimensional counterparts, which we identify with vector point-masses. The higher geometrical complexity of our setting carries some additional difficulties in proving a vector counterpart of the strong Bergman asymptotics \eqref{sba-original}, which is in fact only conjectured in this work (see Conjecture \ref{thm:MR3}).}

\subsubsection{Polynomial interpolation of differential forms}    

A case of particular interest is the one of $ U^\R = \Lambda^k_\R $, i.e. the space of real alternating $k$-covectors spanned by $dx^{\alpha}:=dx^{\alpha_1}\wedge \dots\wedge dx^{\alpha_k}$ with $\alpha_1<\alpha_2<\dots<\alpha_k$. In this case, the space $ \mathscr{P}_{r,n} \Lambda^k_\R \subset \mathscr{C}^0 \Lambda^k_\R $ is that of \emph{real polynomial differential forms}, namely real polynomial sections of the $k$-th exterior power of the cotangent bundle of $ \R^n $; see \cite{Warner}. Although in this application we will always assume $K\subset\R^n$, in our construction we will look at $\Lambda^k_\R$ as a real subspace of the complex Hermitian space
$$\Lambda^k_\C:={\Span}_\C\{dx^{\alpha}:|\alpha|=k,\,\alpha\text{ is increasing}\},$$
endowed by the Euclidean product. We remark that this embedding is very natural from the perspective of approximation theory, even if it might sound odd from the point of view of differential geometry.
 
 Functions with values in $ \Lambda^k_\R $ are generally called \emph{tensors} and measure \emph{how} a reference tensor field (for instance, a vector field defined on $ K $ when $ k = 1 $) is modified at each point of $ \R^n $; see \cite{AMRBook}. \rev{Important examples} of this kind are the elasticity tensor, the stress tensor or the Faraday $2$-form in electromagnetism \cite{Flanders}. These quantities describe physical objects, so that a correct understanding of their polynomial counterpart is essential in their approximation. The \rev{use} of polynomial differential forms ranges from interpolation theory \cite{BBZ22} to finite element methods \cite{AFW} and structure preserving methods \cite{Hiemstra}.

To interpolate in $ \mathscr P_{r,n} \Lambda^k_\C $, a set of linear functionals $ T_i : \mathscr{C}^0 \Lambda^k \to \C $, with $ i = 1, \ldots, N := \dim \mathscr P_{r,n} \Lambda^k_\C $, is needed. These objects are named \emph{currents} \cite{Federer}, and in the simplicial context they are usually represented by means of moments \cite{Nedelec86} or weights \cite{RapettiBossavit}. When $ \{T_i\}_{i=1}^N $ forms a basis for $ \left( \mathscr P_{r,n} \Lambda^k_\C \right)^* $, the dual space of $ \mathscr P_{r,n} \Lambda^k_\C $, the set $ \{T_i\}_{i=1}^N $ is said to be unisolvent for $ \mathscr P_{r,n} \Lambda^k_\C $. Hence, we may represent any $ \omega \in \mathscr P_{r,n} \Lambda^k_\C $ as
\begin{equation} \label{eq:interpolationoperator}
    \omega = \sum_{i=1}^N T_i (\omega) \omega_i,
\end{equation}
being $ \{\omega_i\}_{i=1}^N $ the Lagrangian basis for $ \mathscr P_{r,n} \Lambda^k_\C $, namely that satisfying $ T_i (\omega_j) = \delta_{i,j} $. Note that in the above formula the currents $ T_i (\omega) $ may be thought of as measurements of the physical quantity $ \omega $. Given a compact real set $K$ (determining for $ \mathscr P_{r,n} \Lambda^k_\C $ for any $r\in \N$), we will always consider the uniform norm on $\mathscr{C}^0(K, \Lambda^k_\C)$, that is 
$$\|\omega\|_{\mathscr{C}^0(K, \Lambda^k_\C)}:=\sup_{x\in K}\|\omega(x)\|_{\Lambda^k_\C}.$$ 
Therefore the norm $ \Vert \Pi \Vert_{\mathrm{op}} $ of the interpolation operator $ \Pi : \mathscr{C}^0 (K,\Lambda^k_\C) \to \mathscr P_{r,n} \Lambda^k_\C $ specified by \eqref{eq:interpolationoperator}, \rev{i.e. $ \Pi \omega := \sum_{i=1}^N T_i (\omega) \omega_i $ for $ \omega \in \mathcal{C}^0 (\Lambda^k_\C) $}, is named \emph{Lebesgue constant} and quantifies the stability of the interpolation via the Lebesgue inequality
$$ \rev{\Vert \theta - \Pi \theta \Vert_{\mathscr{C}^0 (K\Lambda^k_\C)} \leq \left( 1 + \Vert \Pi \Vert_{\mathrm{op}} \right) \min_{\omega \in \mathscr P_{r,n} \Lambda^k_\C}\Vert \omega - \theta \Vert_{\mathscr{C}^0(K, \Lambda^k_\C)}} .$$
In the \rev{search of} currents that are well-suited for interpolation, Fekete problems play an essential role. By Fekete problem we mean the identification of a collection of unisolvent currents $ \{ T_i \}_{i=1}^N $ such that the (modulus of the) determinant of the \emph{Vandermonde matrix}
\begin{equation} \label{eq:VdmMatrix}
    V_{i,j} = T_{i} (q_j),
\end{equation}
\rev{with respect to a given (and thus any) basis $q_1,\dots,q_N$ of $ \mathscr P_{r,n} \Lambda^k_\C $} is maximal. When $ k = 0 $, i.e. in the case of functions, the solution of this problem has been investigated for a long time. \rev{Note that any uniform probability measure supported at \emph{Fekete points} is a solution of such a maximization problem.} When $ k > 0 $ the situation becomes more involved and only a recent result offers an answer in the particular setting of weights \cite{BE24}.

\rev{\subsection*{Acknowledgements}
The work of Professor Bos, together with his helpfulness, had and is still having an enourmous impact on the mathematical developement of the Constructive Approximation and Applications research group of Padova and Verona, the research team where the second author grown up. For this reason he feels indebted and grateful with Prof. Len Bos. His ideas and techniques on unisolvence have been precious in the development of several works of the first author.

Both authors deeply thanks the two anonymous referees for their valuable and precise comments that allowed the authors to improve the present manuscript.}

\section{Weighted transfinite diameter}\label{sec:WTD}
This section is devoted to introducing a notion of \rev{$r$-th} weighted transfinite diameter of a compact set $K\subset\C^n$, \rev{with respect to the weight $ w $ and the space $ U $}, which is well suited for the study of varying weight polynomial interpolation of vector valued holomorphic functions.

Let $ m_r := \binom{n+r}{n} $, so that $ \dim \mathscr{P}_{r,n} \C = m_r $. We define the $r$-th weighted diameter of a set $K$ with respect to the space $U$ as
\begin{equation} \label{eq:defweighteddiam}
	\delta^{w,r} (K,U) := \left[ \max_{\substack{T_1 \in \mathcal M(K,U) \\ \Vert T_1 \Vert = 1}} \ldots \max_{\substack{T_N \in \mathcal M(K,U)\\ \Vert T_N \Vert = 1}} \Big| \det [T_i \left( q_j \odot w^r \right)]_{i,j=1,\dots,N} \Big| \right]^{1/(s\cdot\ell_r)},
\end{equation} 
where $ \ell_{r} := \sum_{k=1}^r k \left( m_k - m_{k-1} \right),$ and the $q_j$'s have been defined in \eqref{eqn:basisq}.

As announced, we are able to prove that the limit of the $r$-th diameters does exist, and it equals the geometric mean of the classical weighted transfinite diameters obtained by considering each component of the vectorial weight $w$.
\begin{theorem}\label{thm:MR1}
	Let $ K \in \C^n $ be a compact set, $U$ as above and $ w_l \in \mathscr{C}^0 (K, ]0,+\infty[)$ for each $l=1,\dots,s=\ddim U$. Then
	\begin{equation} \label{eq:transfinitediameter}
		\delta^w(K,U):=\lim_{r\to \infty} \delta^{w,r} (K,U) = \left( \prod_{l=1}^s \delta^{w_l} (K) \right)^{1/s}, 
	\end{equation}
	where existence of the limit is part of the statement.
\end{theorem}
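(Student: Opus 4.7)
The plan is to exploit Riesz's representation \eqref{eq:Tdef} to rewrite each admissible functional in terms of scalar complex measures, thereby reducing the vector-valued Vandermonde determinant to a combinatorial sum of classical weighted Vandermonde determinants, one per coordinate direction in $U$. Every admissible $T_i$ has the form $T_i(q)=\int_K(v_i,q)_U\,d\mu_i$ with $\|v_i\|_U=1$ $\mu_i$-a.e.\ and $\mu_i(K)=\|T_i\|=1$. Since $w^r(x)=\sum_l w_l(x)^r u_l$ and $\odot$ is componentwise, $(q_j\odot w^r)(x)=x^{\beta(j)}w_{s(j)}(x)^r u_{s(j)}$, so each basis element ``points'' along a single coordinate $u_{s(j)}$. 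Introducing the complex scalar measures $d\lambda_i^{(l)}:=\overline{(v_i)_l}\,d\mu_i$, of total variation $\rho_i^{(l)}\le 1$, one obtains the key identity
\[
T_i(q_j\odot w^r)=\int_K x^{\beta(j)}\,w_{s(j)}(x)^r\,d\lambda_i^{(s(j))}(x),
\]
which decouples the weights column-block by column-block.

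\textbf{Lower bound.} I would realize the lower bound by a structured choice of functionals: partition $\{1,\dots,N\}$ into blocks $I_1,\dots,I_s$ of size $m_r$ and set $T_i(q):=(u_{l(i)},q(x_i))_U$, where $l(i)=l$ iff $i\in I_l$; these vector point-masses satisfy $\|T_i\|=1$. The resulting matrix has $(i,j)$-entry $\delta_{l(i),s(j)}\,x_i^{\beta(j)}\,w_{s(j)}(x_i)^r$, and reordering rows and columns by the value of $l$ turns it into a block-diagonal matrix with $s$ blocks, the $l$-th block being (up to row-rescaling by $w_l(x_i)^r$) the classical Vandermonde at $\{x_i\}_{i\in I_l}$. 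Maximizing the $x_i$'s in each block independently gives
\[
\delta^{w,r}(K,U)^{s\ell_r}\ge \prod_{l=1}^{s}\bigl(\delta^{w_l,r}(K)\bigr)^{\ell_r}.
\]

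\textbf{Upper bound.} For arbitrary admissible $T_i$'s, I would expand $|\det[T_i(q_j\odot w^r)]|$ by Leibniz and group permutations $\sigma$ by the assignment $l_\sigma:i\mapsto s(\sigma(i))$; every such $l_\sigma$ is automatically \emph{balanced} (each value taken exactly $m_r$ times) because $s(\cdot)$ is. For fixed balanced $l$, the partial sum over consistent $\sigma$ factorizes, up to an overall sign, as $\prod_{k=1}^s\det\bigl[\int x^\alpha w_k^r\,d\lambda_i^{(k)}\bigr]_{i\in l^{-1}(k),\,\alpha}$, since $\sigma$ splits into independent bijections $l^{-1}(k)\to\{j:s(j)=k\}$. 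After rescaling row $i$ by $1/\rho_i^{(k)}$, each factor is the determinant of a system of $m_r$ norm-one complex measures tested against the weighted monomials $x^\alpha w_k^r$, and is therefore bounded by $(\delta^{w_k,r}(K))^{\ell_r}$ by the very scalar definition. Summing over balanced $l$, enlarging to all assignments $l:\{1,\dots,N\}\to\{1,\dots,s\}$, and invoking the Cauchy--Schwarz bound $\sum_l|(v_i)_l(x)|\le\sqrt{s}\,\|v_i(x)\|_U=\sqrt{s}$ $\mu_i$-a.e.\ to get $\sum_l\rho_i^{(l)}\le\sqrt{s}$, one concludes
\[
\delta^{w,r}(K,U)\le \prod_{l=1}^{s}\bigl(\delta^{w_l,r}(K)\bigr)^{1/s}\cdot s^{m_r/(2\ell_r)}.
\]

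\textbf{Passage to the limit and main obstacle.} Abel summation yields $\ell_r=r m_r-\sum_{k<r}m_k$, hence $\ell_r/m_r\sim r/(n+1)\to\infty$, so the spurious factor $s^{m_r/(2\ell_r)}$ tends to $1$. Combined with the scalar convergence $\delta^{w_l,r}(K)\to\delta^{w_l}(K)$ from \cite[Prop.~2.7]{BlLe10}, the two-sided estimate pinches $\delta^{w,r}(K,U)$ to the desired geometric mean, so the limit exists and equals $\bigl(\prod_l\delta^{w_l}(K)\bigr)^{1/s}$. The main obstacle is the upper bound: one must carefully unfold the Leibniz expansion into independent scalar weighted Vandermonde blocks (a factorization made possible only because $q_j\odot w^r$ points along a single coordinate of $U$), and then verify that the Cauchy--Schwarz loss of a $\sqrt{s}$ factor per row is diluted to $1$ after the $(s\ell_r)$-th root, which is precisely the content of the asymptotic $m_r=o(\ell_r)$.
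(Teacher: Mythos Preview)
Your argument is correct, and the lower bound is identical to the paper's. For the upper bound the two proofs diverge. The paper squares the determinant, writes it as $\bigotimes_i T_i$ acting on the antisymmetric tensor $\sum_\sigma \sgn{\sigma}\,\bigotimes_j (q_{\sigma(j)}\odot w^r)$, bounds this by $\max_{\boldsymbol x\in K^N}$ of the $U^N$-norm of that tensor, and then expands the $U^N$-inner product using the orthogonality of the $u_l$'s to reduce to products of scalar Vandermondes; the combinatorial overhead incurred is $N!$. You instead expand the determinant directly by Leibniz, group the permutations by the balanced coloring $i\mapsto s(\sigma(i))$, and for each coloring factor the partial sum (up to a sign) into a product of $s$ scalar weighted Vandermonde determinants indexed by the colors, each then bounded via multilinearity; your overhead is $s^{N/2}$. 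Both overheads vanish after the $(s\ell_r)$-th root because $N/\ell_r\to 0$. Your route is more elementary in that it avoids tensor products of functionals and exposes the block structure explicitly; the paper's route, by contrast, lands on the pointwise quantity $\max_{\boldsymbol x}\|\sum_\sigma\sgn{\sigma}\bigotimes_j(q_{\sigma(j)}\odot w^r)(\boldsymbol x)\|_{U^N}$, which is reused verbatim in the proof of the free-energy asymptotics (Proposition~\ref{prop:MR2}).

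One small point to make explicit: when you write ``bounded by $(\delta^{w_k,r}(K))^{\ell_r}$ by the very scalar definition'', note that the scalar $\delta^{w_k,r}(K)$ is defined as a maximum over \emph{point evaluations}, not over norm-one complex measures. The missing one-line reduction is the multilinearity identity
\[
\det\Bigl[\int_K x^\alpha w_k^r\,d\nu_i\Bigr]_{i,\alpha}=\int_{K^{m_r}}\Vdm(x_1,\dots,x_{m_r})\prod_{i}w_k^r(x_i)\,d\nu_1(x_1)\cdots d\nu_{m_r}(x_{m_r}),
\]
from which the bound follows immediately by taking absolute values and using $|\nu_i|(K)\le 1$.
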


\begin{proof}
	The result \rev{may} be proved by generalizing to the vectorial case the argument of V. Zaharjuta adopted in \cite{Za75,Za12}. \rev{Instead, we propose a proof that uses algebraic manipulations that essentially allow us to go back to the scalar case.}
	
	Let $ T_1, \ldots, \textcolor{red}{T_N} \in \mathcal M(K,U) $, with $ \Vert T_i \Vert = 1 $ for each $ i $, \rev{ be such that $ \left\vert \det T_i \left( p_j \odot w \right) \right\vert^2 = (\delta^{w,r} (K,U))^{2 s\cdot \ell_r} $}. We then have, \rev{denoting by $\Sigma_N$ the set of permutation of $N=s m_r$ elements,}
	\begin{align*}
		& (\delta^{w,r} (K))^{2s  \ell_r}= \left\vert \det T_i \left( q_j \odot w \right) \right\vert^2  \\
		= &   \left\vert \bigotimes_{i=1}^N T_i\left( \sum_{\sigma \in \Sigma_N} \sgn \sigma \bigotimes_{j=1}^N \left( q_{\sigma(j)} \odot w^r \right) (x_1, \ldots, x_N)\right) \right\vert^2  \\
		\leq & \left\Vert \bigotimes_{i=1}^N T_i \right\Vert_{[\mathcal M(K,U)]^N}^2 \max_{\boldsymbol{x}\in K^N} \left\Vert \sum_{\sigma \in \Sigma_N} \sgn \sigma \bigotimes_{j=1}^N \left( q_{\sigma(j)} \odot w^r \right) (x_1, \ldots, x_N) \right\Vert^2_{U^N}  \\
	 	= & \max_{\boldsymbol{x}\in K^N} \sum_{\sigma \in \Sigma_N} \sum_{\tau \in \Sigma_N}  \sgn \sigma \sgn \tau \left( \bigotimes_{j=1}^N \left( q_{\sigma(j)} \odot w^r \right) (x_1, \ldots, x_N), \bigotimes_{j=1}^N  \left(q_{\tau(j)} \odot w^r \right) (x_1, \ldots, x_N) \right)_{U^N} \\
	 	 = & \max_{\boldsymbol{x}\in K^N} \sum_{\sigma \in \Sigma_N} \sum_{\tau \in \Sigma_N}  \sgn \sigma \sgn \tau \prod_{j=1}^N \Big( (p_{\sigma(j)} \odot w^r) (x_j),  (p_{\tau(j)} \odot w^r )(x_j) \Big)_{\C^s} .
	\end{align*}
	Now, notice that $ \left(\left( p_{\sigma(j)} \odot w^r \right) (x_j),  \Big(p_{\tau(j)} \odot w^r \right) (x_j) \Big)_{\C^s} = 0 $ if $ s(\sigma(j)) \neq s(\tau(j)) $ for some $ j \in \{1, \ldots, N \} $, i.e., the only $ \tau\in \Sigma_N $ contributing to the sum are the one for  which $s(\tau(j))=s(\sigma(j))$ for all $j=1,\dots,N$ and for which the set $\{\beta(\tau(j)), j:s(\tau(j))=l\}$ can be obtained by a permutation of the set $\{\beta(\sigma(j)), j:s(\sigma(j))=l\}.$ Namely, we consider only $\tau$ of the form $ \tau = \widetilde{\tau} \circ \sigma $ with $\widetilde{\tau} = \widetilde{\tau}_{l} \circ \ldots \widetilde{\tau}_{s} $ and $ \widetilde{\tau}_{l} = \mathbb{I}_{s} \otimes \eta_l, \eta_l \in \Sigma_m $ for all $l=1,\dots, s$. Thus, introducing the notation $ J (\sigma,l) := \{ j \in \{1, \ldots, N\} : s(\sigma(j)) = l \} $, we have
	\begin{align*}
		& (\delta^{w,r} (K))^{2 s \ell_r} \\
		= & \max_{\boldsymbol{x}\in K^N} \sum_{\sigma \in \Sigma_N} \sum_{\eta_1 \in \Sigma_m} \ldots \sum_{\eta_s \in \Sigma_m} (\sgn \sigma)^2 \prod_{l = 1 }^s \sgn {\eta_l} \prod_{j \in J(\sigma,l)} \bar x_j^{\beta(\sigma(j))} x_j^{\beta(\eta_l (\sigma (j)))} w_l^{2r} (x_j) \\
		= & \max_{\boldsymbol{x}\in K^N} \sum_{\sigma \in \Sigma_N} \prod_{l=1}^s \sum_{\eta \in \Sigma_m}\sgn {\eta} \prod_{j \in J(\sigma,l)} \bar x_j^{\beta(\sigma(j))} x_j^{\beta(\eta(\sigma (j)))} w_{l}^{2r} (x_j)\\
		\rev{\leq}& \sum_{\sigma \in \Sigma_N} \prod_{l=1}^s \max_{\boldsymbol x_{J(\sigma,l)}:=(x_{J(\sigma,l)_1},\dots,x_{J(\sigma,l)_{m_r}})\in K^{m_r}}\left\vert \det V(\sigma,l,\boldsymbol x_{J(\sigma,l)}) \right\vert^2\\
		 =& \sum_{\sigma \in \Sigma_N} \prod_{l=1}^s \max_{\boldsymbol z\in K^{m_r}}\left\vert \Vdm(z_1,\dots,z_{m_r}) \right\vert^2w_l(z_1)^{2r}\dots w_l(z_{m_r})^{2r}\\
		\rev{=} & N! \prod_{l = 1}^s \left( \delta^{w_{l},r} (K) \right)^{2 s \ell_r },
	\end{align*}
	where $\Vdm(z_1,\dots,z_{m_r})$ is the classical Vandermonde determinant with respect to the monomial basis, $ (V(\sigma,l,\boldsymbol{x}_{J(\sigma,l)}))_{h,k} := x_{(J(\sigma,l))_h}^{\beta(\sigma(k))} w_{l}(x_{(J(\sigma,l))_h})^{r}$, and we used that 
	\begin{align*}
	 &\sum_{\eta \in \Sigma_m}\sgn {\eta} \prod_{j \in J(\sigma,l)} \bar{x}_j^{\beta(\sigma(j))} x_j^{\beta(\eta (\sigma (j)))} w_{l}^{2r} (x_j) = \det [V(\sigma,l,\boldsymbol{x}_{J(\sigma,l)})^H V(\sigma,l,\boldsymbol{x}_{J(\sigma,l)})]\\ =& |\det V(\sigma,l,\boldsymbol{x}_{J(\sigma,l)}) |^2	  = \vert \Vdm(x_{J(\sigma,l)_1},\dots,x_{J(\sigma,l)_{m_r}}) \vert^2 w_l^{2r}(x_{J(\sigma,l)_1})\cdots w_l^{2r}(x_{J(\sigma,l)_{m_r}}),
	 \end{align*}
since, for any $\sigma \in \Sigma_N$ and any $l=1,\dots,s$,
	$$\{\beta(\sigma(j)), j\in J(\sigma,l)\}=\{\beta\in \N^n: |\beta|\leq r\}.$$
	Therefore, taking the limit superior as $r$ tends to $+\infty$, we get
	$$ \limsup_r \delta^{w,r}(K,U) \leq \limsup_r \left( N! \right)^{1/(2s \ell_r)} \cdot \limsup_r \left( \prod_{l=1}^s \delta^{w_l, r} (K) \right)^{1/s} = \left( \prod_{l=1} ^s\delta^{w_l} (K) \right)^{1/s} . $$
	This shows the upperbound.

	Conversely, let $ x_{1,l}^{(r)}, \ldots, x_{m_r,l}^{(r)} $ be weighted Fekete points of degree $ r $ for the weight $w_l$, and let
	$$ y = \underbrace{x_{1,1}^{(r)}, \ldots, x_{1,s}^{(r)}}_{s \text{ elements}}, \ldots, \underbrace{x_{m_r,1}^{(r)}, \ldots, x_{m_r,s}^{(r)}}_{s \text{ elements}} .$$
	We consider the $U$-valued vector measures $T_i^{(r)}$'s defined by setting $ T_i^{(r)} (\omega) :=(\omega(y_i),u_{s(i)})_U $ and we observe that
	\begin{align*}
		\left\vert \Vdm (T_1^{(r)}, \ldots, \textcolor{red}{T_N}^{(r)}, q_1 \odot w, \ldots, q_N \odot w) \right\vert^{1/\ell_r} =& \left(\prod_{l=1}^s \left\vert \Vdm(x_{1,l},\dots,x_{m_r,l}) \right\vert^2w_l(x_{1,l})^{2r}\dots w_l(x_{m_r,l})^{2r}\right)^{1/(s \ell_r)}\\
		 =& \left( \prod_{l=1}^s \delta^{w_l, r} (K) \right)^{1/s}.
	\end{align*}
	On the other hand, the vector measures $ T_1^{(r)}, \ldots, \textcolor{red}{T_N}^{(r)} $ define a competitor in the maximization that defines $ \delta^{w,r} (K,U) $. Thus
	$$ \delta^{w,r} (K,U) \geq \left\vert \Vdm (T_1^{(r)}, \ldots, \textcolor{red}{T_N}^{(r)}, q_1 \odot w, \ldots, q_N \odot w) \right\vert^{1/\ell_r}= \left( \prod_{l=1}^s \delta^{w_l, r} (K) \right)^{1/s} .$$
	Taking the lim-inf as $ r \to \infty $, the proof is concluded.
\end{proof}

\section{Bernstein Markov Property, Gram determinants, and "free energy asymptotics"}\label{sec:BM-Gram}
\subsection{Bernstein Markov property in $\mathcal M(K,U)$}
Let $T\in\mathcal M(K,U)$ be represented by the Borel measure $\mu$ and the $\mu$-measurable function $v:K\rightarrow U$, see \eqref{eq:Tdef}. We can associate to this linear functional a rank-one semimetric $g$ on $U$ by setting
$$g_x(\omega,\theta):=\overline{(v(x),\omega(x))_U} (v(x), \theta(x))_U.$$
\rev{Note that we will often omit the dependence of $g$ on $x$ to simply our notation.} Such semimetric naturally defines (by means of integration with respect to $\mu$) a seminorm (induced by a semidefinite scalar product) on $\mu$-measurable functions $K\rightarrow U$ given by
$$\|\omega\|_{v,\mu}:=\left(\int_K g_x(\omega,\omega)d\mu(x)\right)^{1/2}.$$
Although $g$ is merely a semimetric with low rank, it may happen that $\|\cdot\|_{v,\mu}$ is indeed a norm on $\pu{r}$; in such a case we term the pair $(v,\mu)$ (or, equivalently, the linear functional $T$) $\pu{r}$-determining. Note that the easiest way to construct such a determining pair is to pick $x_1,\dots,x_M\in K$ \rev{(recall that here and throughout the paper we assume $K$ to be polynomial determining)} and $v_1,\dots,v_M\in U$, with $\|v_h\|_{U}=1$, such that, defining $T_h(\omega):=(\omega(x_h),v_h)_U$, the matrix $V:=[T_h(e_k)]_{h=1,\dots,M,k=1,\dots, N}$ has full column-rank. Indeed, in such a case one can set $\mu:=\sum_{h=1}^M \delta_{x_h}$ and $v(x_h)=v_h$, and get $g_{x_h}(\omega,\theta)=\overline{T_h(\omega)}T_h(\theta)$, so that $\|\omega\|_{v,\mu}^2=\sum_hT_h^2(\omega)$, being the associated scalar product on $\pu{r}$ represented by $V^HV.$ 

Since $\pu{r}$ is finite dimensional, it is clear that, for any triple $(K,v,\mu)$ with $(v,\mu)$ $\pu{r}$-determining, there exists a constant $C$ such that 
$$\|\omega\|_K:=\max_{x\in K}|\omega(x)|\leq C\|\omega\|_{v,\mu},\;\forall \omega \in \pu{r}.$$
However, in what follows we will always make an assumption on the asymptotic comparability of these two norms, as $r\to +\infty.$ Namely, we assume that $(K,v,\mu)$ (or $(K,v^{(r)},\mu^{(r)})$ sometimes) satisfies the \emph{Bernstein Markov property} \cite{BlLePiWi15}, that in the context of $\pu{r}$ reads as:
\begin{equation}\label{BM-prop}
\limsup_{r\to +\infty}M_r(K,v^{(r)},\mu^{(r)})^{1/r}:=\limsup_{r\to +\infty}\sup_{\omega\in \pu{r}\setminus\{0\}}\left(\frac{\|\omega\|_{K,U}}{\|\omega\|_{v^{(r)},\mu^{(r)}}}\right)^{1/r}\leq 1.
\end{equation}
The supremum in such a definition is indeed a maximum. In the scalar case $U=\C$ this is an immediate consequence of Parseval Identity, being $M_r$ the square root of the diagonal of the reproducing kernel, i.e., $M_r=\sqrt{B(x,\mu)}=\max_{x\in K}(\sum_{i=1}^{m_r}|b_i(x)|)^{1/2},$ with $\int_K \bar b_i b_j d\mu=\delta_{i,j}$ any orthonormal basis of $\mathscr P_{r,n}\C.$ In our framework, given an orthonormal basis $b_1,\dots,b_N$ of $\pu{r}$, we can form the Hermitian positive definite matrix $G(x)$, with $G_{i,j}(x):=(b_i(x),b_j(x))_U$. For any $x\in K$, we can interpret the quantity
$$\sup_{\omega\in \pu{r}\setminus\{0\}}\frac{\|\omega(x)\|_{U}^2}{\|\omega\|_{v,\mu}^2}=\max_{z\in \C^s}\frac{z^HG(x)z}{\|z\|^2}=:B(x,\mu,v)$$
as Rayleigh quotient of the matrix $G(x)$, so that $M_r(K,v,\mu)=\max_{x\in K}\sqrt{\lambda_{max}G(x)}$.

We need to extend the notion of Bernstein Markov  in $\mathcal M(K,U)$ property in two directions: we need to possibly include a \emph{varying weight} on the coefficients, and we need to consider the tensor product version of both the weighted and unweighted instances of the Bernstein Markov property.  We remark that in what follows all the \textquotedblleft weighted statements\textquotedblright\ strongly depend on the choice of orthonormal coordinates in $U$.  

A sequence $(v^{(r)},\mu^{(r)})$ of $\puw{r}$-determining vector measures is said to enjoy the weighted Bernstein Markov property with respect to the weight $w=(w_1,\dots,w_s)$ if
\begin{equation}\label{WBM-prop}
\limsup_{r\to +\infty}M_r(K,w,v^{(r)},\mu^{(r)})^{1/r}:=\limsup_{r\to +\infty}\sup_{\omega\in \pu{r}\setminus\{0\}}\left(\frac{\|\omega\|_{K,U,w^r}}{\|\omega\|_{w^r,v^{(r)},\mu^{(r)}}}\right)^{1/r}\leq 1,
\end{equation}
with $\|\omega\|_{K,U,w^r}:=\sup_{x\in K}\|\omega(x)\odot w^r(x)\|_U$ and $\|\omega\|_{w^r,v^{(r)},\mu^{(r)}}:=\|\omega\odot w^r\|_{v^{(r)},\mu^{(r)}}.$

\rev{\begin{remark}[BM vector measures asssociated to Fekete points]\label{FeketeareBM}
It is worth pointing out here a relevant way to construct a Bernstein Markov pair $(v^{(r)},\mu^{(r)})$ for a given polynomial determining compact set $K$ and a weight $w$. Let us pick, for any $l=1,2,\dots,s$, an asymptotically Fekete (for scalar polynomial interpolation) triangular array $(x_{h,r}^{(l)})_{h=1,\dots,m_r,r\in \N}$ of points of $K$ with respect to the weight $w_l$, i.e., assume 
$$\lim_r \Big|\det[(x_{h,r}^{(l)})^{j-1}w_l^r(x_{h,r}^{(l)})]_{h,j}\Big|^{1/l_r}=\delta^{w_l}(K),\;\forall l=1,\dots,s.$$
It is clear that, without loss of generality, we can assume $\{x_{1,r}^{(l_1)},\dots,x_{m_r,r}^{(l_1)}\}\cap \{x_{1,r}^{(l_2)},\dots,x_{m_r,r}^{(l_2)}\}=\emptyset$ for any $l_1\neq \l_2$.

Let us now introduce the probability measure $\mu^{(r)}:=\frac 1 s\sum_{l=1}^s\mu^{(r)}_l:=\frac 1 s\sum_{l=1}^s\frac 1{m_r}\sum_{h=1}^{m_r}\delta_{x_{h,r}^{(l)}}$, and the $\mu^{(r)}$-a.e. $U$-valued function $v^{(r)}(x)=u_i$, whenever $x=x_{h,r}^{(i)}.$ Using the scalar case one has that each measure $\mu^{(r)}_l$, $l=1,\dots s$, enjoys a weighted Bernstein Markow property on $K$ with respect to the weight $w_l$. But notice that the orthogonallity of $u_i$'s leads to
\small
$$\|\omega\|_{K,U,w^r}^2=\max_{x\in K}\sum_{i=1}^s|\omega_i(x)w_i^r(x)|^2\leq \sum_{i=1}^s\max_{x\in K}|\omega_i(x)w_i^r(x)|^2\leq \max_{l=1,\dots,s}M_{r,l}\sum_{i=1}^s\|\omega_i\|^2_{\mu_i^{(r)},w_i^r}=:M_r\|\omega\|_{v^{(r)},\mu^{(r)},w^r},$$ 
\normalsize
so that the weighted Bernstein Markov property of the quadruple $(K,w,v^{(r)},\mu^{(r)})$ immediately follows. 
\end{remark}}

Given a $\puw{r}$-determining quadruple $(K,w,v,\mu)$ one can define a semimetric on \rev{$U^t$, $t\in \N$} which induces a norm on tensor products of $\puw{r}$ simply setting $g^{\otimes}_{\boldsymbol x}(\otimes_{h=1}^t z_i,\otimes_{h=1}^t y_i)=\prod_{i=1}^tg_{x_i}(z_i,y_i)$, and 
\begin{align*}
\|\otimes_{i=1}^t \omega_i\|_{v,\mu}:=&\int_K\dots\int_Kg^{\otimes}_{\boldsymbol x}(\otimes_{i=1}^t \omega_i(x_i),\otimes_{i=1}^t \omega_i(x_i))d\mu(x_1)\dots d\mu(x_t)\\
=&\int_K\dots\int_K\prod_{i=1}^t g_{x_i}(\omega_i(x_i),\omega_i(x_i))d\mu(x_1)\dots d\mu(x_t),
\end{align*}
and similarly for the weighted case.

\rev{As a matter of fact, the (weighted) Bernstein Markow property well behaves under tensorization. We omit the simple proof of the following:}
\begin{lemma}\label{lemmatensorBM}
Let $(K,w,v,\mu)$ be as above and assume the Bernstein Markow property \eqref{WBM-prop} to hold. Then, for any $\omega_1,\dots,\omega_t\in \puw{r}$, we have
\begin{equation}
\max_{x_1\in K}\dots \max_{x_t\in K}\|\omega_1(x_1)\otimes\dots\otimes\omega_t(x_t)\|_{U^t}\leq M_r(K,w,v,\mu)^t \|\otimes_{i=1}^t\omega_i\|_{v,\mu}.
\end{equation}
\end{lemma}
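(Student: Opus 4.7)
The proof is essentially a tensorization argument and splits naturally into two halves, one handling the left-hand side and one handling the right-hand side, that are then combined.

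The plan is to exploit the defining property of the Hermitian structure on the tensor power $U^t$: for pure tensors one has
$$\|\omega_1(x_1)\otimes\cdots\otimes\omega_t(x_t)\|_{U^t}=\prod_{i=1}^t \|\omega_i(x_i)\|_U.$$
Since the right-hand side is a product of nonnegative factors, each depending on a single variable $x_i$, the iterated maximum factorizes as
$$\max_{x_1\in K}\cdots\max_{x_t\in K}\prod_{i=1}^t\|\omega_i(x_i)\|_U=\prod_{i=1}^t\max_{x_i\in K}\|\omega_i(x_i)\|_U=\prod_{i=1}^t\|\omega_i\|_{K,U}.$$
Now, because each $\omega_i$ lies in $\puw{r}$, the weighted Bernstein--Markov property \eqref{WBM-prop}, read on $\puw{r}$ directly, gives $\|\omega_i\|_{K,U}\leq M_r(K,w,v,\mu)\,\|\omega_i\|_{v,\mu}$ for every $i=1,\dots,t$. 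Taking the product over $i$ yields
$$\max_{x_1\in K}\cdots\max_{x_t\in K}\|\omega_1(x_1)\otimes\cdots\otimes\omega_t(x_t)\|_{U^t}\leq M_r(K,w,v,\mu)^t\prod_{i=1}^t\|\omega_i\|_{v,\mu}.$$

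For the right-hand side I would use the multiplicativity of $g^{\otimes}$ on pure tensors, which is built into the definition $g^{\otimes}_{\boldsymbol x}(\otimes_i z_i,\otimes_i y_i)=\prod_i g_{x_i}(z_i,y_i)$. Substituting this into the definition of $\|\otimes_{i=1}^t\omega_i\|_{v,\mu}$ and applying Fubini gives
$$\|\otimes_{i=1}^t\omega_i\|_{v,\mu}^{\,2}=\int_K\!\!\cdots\!\int_K\prod_{i=1}^tg_{x_i}(\omega_i(x_i),\omega_i(x_i))\,d\mu(x_1)\cdots d\mu(x_t)=\prod_{i=1}^t\int_K g_{x_i}(\omega_i,\omega_i)\,d\mu=\prod_{i=1}^t\|\omega_i\|_{v,\mu}^{\,2},$$
so that $\|\otimes_i\omega_i\|_{v,\mu}=\prod_i\|\omega_i\|_{v,\mu}$. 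Plugging this identity into the chain of inequalities above delivers the claim.

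There is no real obstacle here: everything reduces to the scalar weighted Bernstein--Markov inequality applied $t$ times, once the product structure of the two sides has been recognized. The only place that requires a small amount of care is verifying that the tensor Hermitian form used to define $\|\cdot\|_{v,\mu}$ on $U^{\otimes t}$ is indeed the one induced by $g$ on each factor, so that both factorizations (the multiplicative one on $U^t$ and the Fubini one of the iterated $\mu$-integral) are legitimate; this is exactly the content of the definition of $g^{\otimes}$ given just before the statement, so the proof is essentially immediate.
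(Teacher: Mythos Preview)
Your argument is correct and is precisely the ``simple proof'' the paper alludes to but omits: factorize the $U^t$-norm of the pure tensor, maximize each factor separately, apply the weighted Bernstein--Markov bound componentwise, and recognize via Fubini and the definition of $g^\otimes$ that the tensor $(v,\mu)$-norm is the product of the individual $(v,\mu)$-norms. Nothing more is needed.
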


\subsection{Gram determinants, and \textquotedblleft free energy asymptotics\textquotedblright}
Let us introduce the notation $G_r^{v,\mu,w}$ for the weighted Gram matrix of the scalar product induced by $(v,\mu)$ \rev{on the space of weighted $U$-valued polynomials $\puw{r}$} written with respect to the canonical basis, i.e.,
\begin{equation*}
(G_r^{v,\mu,w})_{i,j}:=\int_K g(q_i(x),q_j(x)) d\mu(x).
\end{equation*}
We aim at proving the asymptotics for the determinant of $G_r^{v,\mu,w}$ \rev{as $ r \to \infty$}. As intermediate step in the proof we will compare $\det G_r^{v,\mu,w}$ with the so-called \emph{free energy} $Z_r^{v,\mu,w}$. However, in our generalized setting we need to \rev{extend} the standard definition of $Z_r^{v,\mu,w}$ (see, e.g., \cite{BlLe10}) with respect to the scalar case
$$Z_r^{v,\mu,w}:=\int_K\cdots\int_K \left\vert \left( \sum_{\sigma \in \Sigma_N} \sgn \sigma \bigotimes_{j=1}^N \left( q_{\sigma(j)} \odot w^r \right) (x_1, \ldots, x_N)\right) \right\vert^2_{g^\otimes} d\mu(x_1)\cdots d\mu(x_N),$$
where we denoted by $|\cdot|_{g^\otimes}^2$ the squared seminorm induced on $U^N$ \rev{by $g^\otimes$}. Note that the argument of the seminorm is the formal computation of the determinant of the matrix whose $(i,j)$-th element is $q_i(x_j)\odot w^r(x_j)$, where each multiplication has been replaced by a tensor product.

Now we are ready to state and prove the result of the present section.

\begin{proposition}\label{prop:MR2}
Let $(K,w,v,\mu)$ satisfy the weighted Bernstein Markow property in $\mathcal M(K,U)$, then
\begin{equation} \label{eq:freeenergy}
\lim_r \frac{n+1}{2n r N}\log \det G_r^{v,\mu,w}=\lim_r \frac{n+1}{2n r N}\log Z_r^{v,\mu,w}=\log \delta^w(K,U).
\end{equation} 
\end{proposition}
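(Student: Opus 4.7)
The plan is to link the Gram determinant to the free energy via an Andr\'eief--Heine identity, then sandwich the free energy between two quantities both comparable to $[\delta^{w,r}(K,U)]^{2s\ell_r}$; the weighted Bernstein--Markov property will enter only in the lower estimate. A preliminary observation via the hockey-stick identity gives $\ell_r=\tfrac{nr}{n+1}m_r$ exactly, so that $\tfrac{n+1}{2nrN}=\tfrac{1}{2s\ell_r}$ and the claim reduces to $\tfrac{1}{2s\ell_r}\log Z_r^{v,\mu,w}\to\log\delta^w(K,U)$, and similarly for $\det G_r^{v,\mu,w}$.

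To obtain the first equality in \eqref{eq:freeenergy} I would expand $|D(x_1,\dots,x_N)|_{g^\otimes}^2$ as a double sum over $(\sigma,\tau)\in\Sigma_N^2$, integrate termwise against $\mu^{\otimes N}$, and via the substitution $\tau=\sigma\rho$ recover the Heine identity
$$Z_r^{v,\mu,w}=N!\,\det G_r^{v,\mu,w}.$$
Since $\log N!=O(r^n\log r)\ll s\ell_r=\Theta(r^{n+1})$, this factor is asymptotically invisible. For the upper bound I use the pointwise identity $|D(\boldsymbol x)|_{g^\otimes}^2=|\det[\widetilde T_{x_j}(q_i\odot w^r)]_{i,j}|^2$ with $\widetilde T_{x_j}(\omega):=(v(x_j),\omega(x_j))_U$, a point-mass element of $\mathcal M(K,U)$ of norm at most $1$. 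Definition \eqref{eq:defweighteddiam} therefore bounds the integrand by $[\delta^{w,r}(K,U)]^{2s\ell_r}$, so $Z_r^{v,\mu,w}\leq\mu(K)^N[\delta^{w,r}(K,U)]^{2s\ell_r}$; dividing by $2s\ell_r$ and using $N/\ell_r\to 0$ gives $\limsup_r\tfrac{1}{2s\ell_r}\log Z_r^{v,\mu,w}\leq\log\delta^w(K,U)$.

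For the matching lower bound I take the vector Fekete array $\{y_i\}_{i=1}^N$ from the proof of Theorem \ref{thm:MR1} (obtained by interleaving $s$ disjoint scalar Fekete arrays) and set $\mu_0:=\sum_{i=1}^N\delta_{y_i}$, $v_0(y_i):=u_{s(i)}$. Since $v_0$ selects only the $s(i)$-th component, $G_r^{v_0,\mu_0,w}$ is block-diagonal after permuting indices by $s(\cdot)$, each block coinciding with the scalar weighted Gram matrix at the Fekete points for $w_l$; Theorem \ref{thm:MR1} then yields
$$\tfrac{1}{2s\ell_r}\log\det G_r^{v_0,\mu_0,w}=\tfrac{1}{s}\sum_{l=1}^s\log\delta^{w_l,r}(K)\longrightarrow\log\delta^w(K,U).$$
The weighted BM hypothesis \eqref{WBM-prop}, combined with the trivial bound $\|\omega\|_{w^r,v_0,\mu_0}^2\leq N\|\omega\|_{K,U,w^r}^2$ (a sum of $N$ squares bounded by $N$ times the maximum), gives $\|\omega\|_{w^r,v_0,\mu_0}^2\leq NM_r^2\|\omega\|_{w^r,v,\mu}^2$ for every $\omega\in\pu{r}$, i.e.\ the operator inequality $G_r^{v_0,\mu_0,w}\preceq NM_r^2\,G_r^{v,\mu,w}$. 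Taking determinants and logarithms, then dividing by $2s\ell_r$, the correction $\tfrac{N}{2s\ell_r}\log(NM_r^2)=O\bigl((\log M_r+\log N)/r\bigr)$ tends to $0$ by BM and the polynomial growth of $N$, yielding the matching $\liminf$ on $\tfrac{1}{2s\ell_r}\log\det G_r^{v,\mu,w}$.

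The principal obstacle is the lower bound: I must verify that the Fekete array of Theorem \ref{thm:MR1} can be chosen as a union of $s$ disjoint scalar arrays so that the block-diagonal factorisation of $G_r^{v_0,\mu_0,w}$ is genuine, and check that all the comparison constants ($N!$, $N$, $M_r$) are subexponential in $r$. Both points are routine once isolated, but they absorb most of the bookkeeping in a careful write-up.
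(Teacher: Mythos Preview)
Your argument is correct. The identity $Z_r^{v,\mu,w}=N!\det G_r^{v,\mu,w}$ and the upper bound $Z_r^{v,\mu,w}\le\mu(K)^N[\delta^{w,r}(K,U)]^{2s\ell_r}$ coincide with the paper's treatment (equations \eqref{bygramschmidt} and \eqref{byopt}); the pointwise identification of $|D(\boldsymbol x)|_{g^\otimes}^2$ with a squared Vandermonde for the unit-norm point-mass functionals $\widetilde T_{x_j}$ is exactly the competitor step used there.

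Your lower bound, however, follows a genuinely different route. The paper takes the optimal functionals $T_1,\dots,T_N$ for $\delta^{w,r}(K,U)$, writes $|\det[T_i(q_j\odot w^r)]|^2$ as the action of $\bigotimes_i T_i$ on the ``determinant tensor'', and invokes the tensorised Bernstein--Markov Lemma~\ref{lemmatensorBM} to obtain $(\delta^{w,r}(K,U))^{2s\ell_r}\le M_r^{2N}Z_r^{v,\mu,w}$ directly. You instead build an auxiliary discrete pair $(v_0,\mu_0)$ from interleaved scalar Fekete arrays, exploit the resulting block-diagonal structure of $G_r^{v_0,\mu_0,w}$ together with Theorem~\ref{thm:MR1} to compute its log-determinant exactly, and then transfer the estimate to $(v,\mu)$ via the operator inequality $G_r^{v_0,\mu_0,w}\preceq NM_r^2\,G_r^{v,\mu,w}$. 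Both approaches consume the Bernstein--Markov hypothesis at the same rate (a factor $M_r^{(n+1)/(nr)}$), so neither is quantitatively sharper. Your path avoids Lemma~\ref{lemmatensorBM} entirely and is closer in spirit to a change-of-measure argument, at the cost of importing the product formula of Theorem~\ref{thm:MR1} and the disjointness bookkeeping you flag; the paper's path is more self-contained within this section and stays at the level of $Z_r$ throughout. The caveat you raise about choosing the $s$ scalar Fekete arrays with disjoint supports is handled in the paper by a small perturbation (cf.\ Remark~\ref{FeketeareBM}), and the subexponential growth of $N!$, $N$, $\mu(K)^N$, and $M_r$ is routine as you say.
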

\begin{proof}
First notice that (see \cite[Eqn.s (7.3) and (7.4)]{NormSurvey}) $s \ell_r=\frac{nrN}{n+1} .$ Plugging this into \eqref{eq:defweighteddiam}, we get
\begin{equation}\label{maximizationwithexponent}
	(\delta^{w,r} (K,U))^\frac{2n r N}{n+1} = \max_{\substack{T_1 \in \mathcal M(K,U) \\ \Vert T_1 \Vert = 1}} \ldots \max_{\substack{T_N \in \mathcal M(K,U)\\ \Vert T_N \Vert = 1}} \Big| \det [T_i \left( q_j \odot w^r \right)]_{i,j=1,\dots,N} \Big|^2.
\end{equation}

By Gram Schmidt procedure we can replace in the definition of $Z_r^{v,\mu,w}$ the basis $q_j\odot w^r$ by an orthogonal basis, say $b_j\odot w^r$ with respect to the scalar product defined by $G_r^{v,\mu,w},$ \rev{see e.g. \cite[Proof of Thm. 3.1]{BlLe10},} and we obtain 
\begin{equation}\label{bygramschmidt}
\rev{Z_r^{v,\mu,w}=N!\prod_{j=1}^N\|b_j\odot w^r\|_{v,\mu}^2=N! \det G_r^{v,\mu,w}.}
\end{equation}
\rev{
Note that we can also prove that the first term equals the last one by direct computation:
\begin{align*}
&\det G_r^{v,\mu,w}= \sum_{\sigma'\in \Sigma_N}\sgn{\sigma'}\prod_{j'=1}^N\int_K g(q_{j'}\odot w^r,q_{\sigma'(j')}\odot w^r) d\mu\\
=^{{\forall \eta\in \Sigma_N}}&\sum_{\sigma'\in \Sigma_N}\sgn{\sigma'}\prod_{j=1}^N\int_K g(q_{\eta( j)}\odot w^r,q_{\sigma'\circ \eta(j)}\odot w^r) d\mu\\
=^{\forall \eta\in \Sigma_N}&\sgn{\eta}\sum_{\sigma\in \Sigma_N}\sgn{\sigma}\prod_{j=1}^N\int_K g(q_{\eta(j)}\odot w^r,q_{\sigma(j)}\odot w^r) d\mu\\
=&\frac 1 {N!}\sum_{\eta\in \Sigma_N}\sgn{\eta}\sum_{\sigma\in \Sigma_N}\sgn{\sigma}\prod_{j=1}^N\int_K g(q_{\eta(j)}\odot w^r,q_{\sigma(j)}\odot w^r) d\mu\\
=&\frac 1 {N!}\sum_{\eta\in \Sigma_N}\sgn{\eta}\sum_{\sigma\in \Sigma_N}\sgn{\sigma}\int_K\dots\int_K g^{\otimes}(\otimes_{j=1}^N q_{\eta(j)}\odot w^r,\otimes_{j=1}^N q_{\sigma(j)}\odot w^r)d\mu(x_1)\cdots d\mu(x_N)\\
=&\frac 1 {N!}\int_K\dots\int_K g^{\otimes}\left(\sum_{\eta\in \Sigma_N}\sgn{\eta}\otimes_{j=1}^N q_{\eta(j)}\odot w^r,\sum_{\sigma\in \Sigma_N}\sgn{\sigma} \otimes_{j=1}^N q_{\sigma(j)}\odot w^r\right)d\mu(x_1)\cdots d\mu(x_N)\\
=&\frac{Z_r^{v,\mu,w}}{N!}
\end{align*}
}
Let $ T_1, \ldots, \textcolor{red}{T_N} \in \mathcal M(K,U) $, with $ \Vert T_i \Vert = 1 $ for each $ i $ realizing the maximum in \eqref{maximizationwithexponent}. We then have
	\begin{align*}
		& (\delta^{w,r} (K))^\frac{2n r N}{n+1}= \left\vert \det T_i \left( q_j \odot w \right) \right\vert^2  \\
		= &   \left\vert \bigotimes_{i=1}^N T_i\left( \sum_{\sigma \in \Sigma_N} \sgn \sigma \bigotimes_{j=1}^N \left( q_{\sigma(j)} \odot w^r \right) (x_1, \ldots, x_N)\right) \right\vert^2  \\
		\leq & \left\Vert \bigotimes_{i=1}^N T_i \right\Vert_{[\mathcal M(K,U)]^N}^2 \max_{\boldsymbol{x}\in K^N} \left\Vert \sum_{\sigma \in \Sigma_N} \sgn \sigma \bigotimes_{j=1}^N \left( q_{\sigma(j)} \odot w^r \right) (x_1, \ldots, x_N) \right\Vert^2_{U^N} \\
		\leq& M_r^{2N} \left\Vert \sum_{\sigma \in \Sigma_N} \sgn \sigma \bigotimes_{j=1}^N \left( q_{\sigma(j)} \odot w^r \right) \right\Vert_{v,\mu}^2\\
		=& M_r^{2N}\int_K \cdots\int_K\left\vert\sum_{\sigma \in \Sigma_N} \sgn \sigma \bigotimes_{j=1}^N \left( q_{\sigma(j)} \odot w^r(x_1,\dots,x_N) \right) \right\vert_{g^{\otimes}}^2 d\mu(x_1)\dots d\mu(x_N)\\
		=& M_r^{2N} Z_r^{v,\mu,w}.
		\end{align*}
\rev{Note that the Bernstein Markow property and Lemma \ref{lemmatensorBM} has been used to obtain the fourth line.} Thus, extracting the $\frac{nrN}{n+1} $-th root and plugging \eqref{bygramschmidt} in, we have
\rev{\begin{equation}\label{byBM}
\delta^{w,r} (K,U)\leq M_r^{\frac{n+1}{nr}}(Z_r^{v,\mu,w})^{\frac{n+1}{2nrN}}=M_r^{\frac{n+1}{nr}}(N!)^{\frac{n+1}{2nrN}}(\det G_r^{v,\mu,w})^{\frac{n+1}{2nrN}}.
\end{equation}}
Conversely, let $(z_1,\dots,z_N)\in K^N$ such that 
$$\left\vert\sum_{\sigma \in \Sigma_N} \sgn \sigma \bigotimes_{j=1}^N \left( q_{\sigma(j)} \odot w^r(z_1,\dots,z_N) \right) \right\vert_{g^{\otimes}}^2 =\max_{\boldsymbol x\in K^N}\left\vert\sum_{\sigma \in \Sigma_N} \sgn \sigma \bigotimes_{j=1}^N \left( q_{\sigma(j)} \odot w^r(x_1,\dots,x_N) \right) \right\vert_{g^{\otimes}}^2 ,$$
and let $v_i:=v(z_i)$ and let $T_i(\omega):=( \omega(z_i),v_i)_U.$ Then we have
\begin{align*}
Z_r^{v,\mu,w}\leq& \mu(K)^N\max_{\boldsymbol x\in K^N}\left\vert\sum_{\sigma \in \Sigma_N} \sgn \sigma \bigotimes_{j=1}^N \left( q_{\sigma(j)} \odot w^r(x_1,\dots,x_N) \right) \right\vert_{g^{\otimes}}^2 \\
=&\mu(K)^N \sum_{\sigma\in \Sigma_N}\sum_{\tau\in \Sigma_N}\sign \sigma\sign\tau \prod_{i=1}^N\overline{T_i(q_{\sigma(i)}\odot w^r)}T_i(q_{\tau(i)}\odot w^r)\\
=&\mu(K)^N\det (V^H V)=\mu(K)^N|\det V|^2, 
\end{align*}
where $V_{i,j}=T_i(q_{j}\odot w^r).$ However, $T_1,\dots,T_N$ are competitors in the upper envelope defining $\delta^{w,r}(K,U)$, thus, untangling definitions, the inequality
\rev{\begin{equation}\label{byopt}
\mu(K)^{-\frac{n+1}{2nr}}(N!)^{\frac{n+1}{2nrN}}(\det G_r^{v,\mu,w})^{\frac{n+1}{2nrN}}=\mu(K)^{-\frac{n+1}{2nr}}(Z_r^{v,\mu,w})^{\frac{n+1}{2nrN}}\leq \delta^{w,r}(K,U)
\end{equation}}
follows.
We can conclude the proof by taking the limit as $r\to +\infty$ \rev{in} estimates \eqref{byBM} and \eqref{byopt}.
\end{proof}

\section{Strong Bergman Asymptotics and Convergence of Fekete vector measures}\label{sec:Fekete}

In the scalar case, i.e., $s=1$, the strong Bergman asymptotics \eqref{sba-original} is obtained in \cite{BeBoNy11} starting from the free energy asymptotics \eqref{gramasymptotic} (holding for a sequence of measures $\mu^{(r)}$ satisfying a weighted Bernstein Markov property on $K$ with respect to the weight $w=\exp(-Q)$) and performing a derivation with respect to a variation of the weight. More precisely, given a continuous test function $u$, the weight $w_t:=w\cdot\exp(-tu)$ is considered and the free energy asymptotics \eqref{gramasymptotic} is reformulated as
$$f_r(0):=-\frac {n+1}{nNr}\log \det G_r^{w_t,\mu^{(r)}}\Big|_{t=0}\to -\log\delta^{w_t}(K)\Big|_{t=0}.$$
Then, the derivatives at $t=0$ of each side is computed obtaining
\begin{align}\label{firstderivationformula}
&\frac{d}{dt}\log\delta^{w_t}(K)|_{t=0}= \frac{n+1}{n (2 \pi)^n} \sum_{l=1}^s \int_K u \ddcn{V_{K,Q}^*}\;,\\
\label{secondderivationformula}
&f'_r(0)=\frac{n+1}{n}\int_K \frac{B_{w,r}(x,\mu^{(r)})}{N}u(x) d\mu^{(r)}(x).
\end{align}
Here, $B_{w,r}(x,\mu^{(r)})=\sum_{i=1}^{N}|q_i(x;\mu^{(r)})|^2 w^{2r}$, where $\{q_i,i=1,\dots,N\}$ is an orthonormal basis of of $\mathscr P_{r,n}\C$ with respect to the product induced by $\mu$ and $w^2r.$

Finally, the limit as $r\to +\infty$ and the derivation at $t=0$ are exchanged by means of a real analysis lemma (see \cite[Lemma 6.6]{BeBo10}) exploiting the concavity of the functions $f_r$. The asymptotics for Fekete points \eqref{Feketeasympt} is obtained as a specialization of \eqref{sba-original}, by replacing the sequence of Bernstein Markov measures $\mu^{(r)}$ by the sequence of uniform probability measures supported at Fekete points.

%

We generalized the free energy asymptotics to the vector valued setting in Proposition \ref{prop:MR2}. However, the major issue in the extension of the above construction to the vector-valued framework $ s > 1 $ relies in proving the concavity of the function $ f_r $. While the derivation formula for $ f_r $ follows immediately from the scalar case, so $f'_r(0)$ and $ f''_r (0) $ can be explicitly computed (see Proposition \ref{prop:derrivatives} below), it is not clear whether the concavity of $ f_r $ follows.

Although we are not able to overcome this impasse in full generality, the problem simplifies significantly when we consider the case of asymptotically Fekete vector measures defined by vector point masses, being the vectors $N$ copies of each basis vector of $U$. This is the content of Proposition \ref{thm:MR4}, while the more general case is proposed in Conjecture \ref{thm:MR3}. 
\begin{remark}
Though a slightly weaker  version of Proposition \ref{thm:MR4} may be obtained as a consequence of the analogous scalar statement, we decided to state and directly prove Proposition \ref{thm:MR4} here. This choice has been made to offer partial support to Conjecture \ref{thm:MR3}. Indeed, it is worth noticing here that the proof of Proposition \ref{thm:MR4} goes through the same steps of \cite[Thm. C]{BeBoNy11}, while the restriction on the particular structure of the considered vector measures is exploited only when proving $f_r''(t)\leq 0$. The statement of Conjecture \ref{thm:MR3} is obtained by removing such restriction.
\end{remark}

\begin{proposition}\label{prop:derrivatives}
	Let $\omega\in \mathscr C^0(K,U^\R)$, define 
 $$w(x,t):=\left( w_1(x) \exp(-t\omega_i(x)), \ldots, w_s(x)  \exp(-t\omega_s (x))\right),$$ and let
	\begin{equation} \label{eq:fr}
		f_r(t):= - \frac{n+1}{2 nrN }\log \det G_r^{v,\mu,w(x;t)}.
	\end{equation}
	Then we have
	\begin{equation}\label{D1formulasec}
		f'_r(t)=\frac{n+1}{nN }\Re \sum_{h=1}^N \int_K g\left(b_{h}(x,t)\odot w^r(x;t),b_{h}(x,t)\odot w^r(x;t)\odot \omega(x)\right)d\mu(x)\;,
	\end{equation}
	and
	\begin{multline} 
		f''_r(t)=\frac{(n+1)r}{nN }\Bigg[\Re \sum_h\Bigg(-\int_K  g\left(b_h(x,t)\odot w^r(x;t),b_h(x,t)\odot w^r(x;t)\odot \omega^2\right) d\mu(x)\\
		+ \sum_k \Bigg(\int_K  g\left(b_h(x,t)\odot w^r(x;t)\odot \omega,b_k(x,t)\odot w^r(x;t)\right) d\mu(x) \\
		\cdot \int_K  g\left(b_h(x,t)\odot w^r(x;t),b_k(x,t)\odot w^r(x;t)\odot \omega\right) d\mu(x) \Bigg)\Bigg)\\
		+ \Bigg(\sum_h\sum_k \Big| \int_K  g\left(b_h(x,t)\odot w^r(x;t)\odot \omega,b_k(x,t)\odot w^r(x;t)\right) d\mu(x)\Big|^2\\-\sum_h\|b_h(\cdot,t)\odot\omega\|_{w^r(\cdot;t),v,\mu}^2\Bigg)\Bigg]\,,\label{D2formlasec}
	\end{multline}
	where $\{b_{1}(x,t),\dots,b_{N}(x,t)\}$ is an orthonormal basis of $\pu{r}(K)$ with respect to the scalar product defined by $(K,w(\cdot,t),v,\mu).$ 
\end{proposition}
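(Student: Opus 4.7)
The plan is to reduce the derivative computation to the Jacobi formula together with its second-order analogue
$$\frac{d^2}{dt^2}\log\det G(t) = \trace\!\left(G(t)^{-1}\ddot G(t)\right) - \trace\!\left((G(t)^{-1}\dot G(t))^2\right),$$
applied to $G(t):=G_r^{v,\mu,w(\cdot,t)}$. The matrix depends smoothly on $t$ because the only $t$-dependence of its entries is through the real exponential factors $e^{-rt\omega_l}$ in $w^r(\cdot,t)$, and $G(t)$ remains positive definite throughout because $(v,\mu)$ is $\puw{r}$-determining, so the two trace quantities are well-defined. I would then evaluate both traces, at each parameter $t$, in the time-dependent orthonormal basis $\{b_h(\cdot,t)\}$: in this basis $G(t)$ is the identity, so the first trace collapses to $\sum_h \ddot G_{hh}(t)$ and the second to $\sum_{h,k}|\dot G_{hk}(t)|^2$ (using Hermiticity of $\dot G$). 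A pleasant feature of this choice is that no direct computation of $\partial_t b_h(\cdot,t)$ is ever needed: the basis-rotation effect is precisely encoded by the $-\trace((G^{-1}\dot G)^2)$ correction.

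For the first derivative I would differentiate each entry using the key identity $\partial_t w^r(\cdot,t)=-r\,\omega\odot w^r(\cdot,t)$. Applied to both slots of $g(\cdot,\cdot)$ inside $G_{hk}(t)=\int_K g(b_h\odot w^r,b_k\odot w^r)\,d\mu$, this yields
$$\dot G_{hk}(t) = -r\!\int_K\!\bigl[g(b_h\!\odot\! w^r\!\odot\!\omega,\,b_k\!\odot\! w^r) + g(b_h\!\odot\! w^r,\,b_k\!\odot\! w^r\!\odot\!\omega)\bigr]\,d\mu.$$
Setting $h=k$ and observing that the two diagonal integrals are complex conjugates of each other (a consequence of the defining identity $g(\alpha,\beta)=\overline{(v,\alpha)}(v,\beta)$), their sum equals twice the real part of either; summing over $h$ and multiplying by the overall constant $-\tfrac{n+1}{2nrN}$ then delivers \eqref{D1formulasec}.

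The second derivative is obtained by differentiating a second time. On the diagonal, $\ddot G_{hh}(t)$ collects a second-order piece from differentiating one factor of $w^r$ twice (producing $\omega^2$ on one side of $g$) together with a mixed piece from differentiating each side once, which assembles into $\|b_h\!\odot\!\omega\|^2_{w^r,v,\mu}$. These contributions account for the $\omega^2$ and norm-squared blocks of \eqref{D2formlasec}. The off-diagonal contributions come from $-\sum_{h,k}|\dot G_{hk}|^2 = -r^2\sum_{h,k}|A_{hk}+B_{hk}|^2$, where $A_{hk}=\int_K g(b_h\!\odot\! w^r\!\odot\!\omega,\,b_k\!\odot\! w^r)\,d\mu$ and $B_{hk}=\int_K g(b_h\!\odot\! w^r,\,b_k\!\odot\! w^r\!\odot\!\omega)\,d\mu$; expanding the square and using the symmetry $B_{hk}=\overline{A_{kh}}$ produces the remaining squared-modulus and cross-terms in \eqref{D2formlasec}, and prefactoring by $-\tfrac{n+1}{2nrN}$ closes the computation. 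The main technical task is thus purely algebraic bookkeeping of conjugates and real parts; no analytic subtlety arises, since the whole problem reduces to finite-dimensional calculus applied to a smooth, strictly positive-definite family of Hermitian matrices.
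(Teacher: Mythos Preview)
Your proposal is correct and follows essentially the same route as the paper: apply Jacobi's formula (and its second-order analogue) to $G(t)=G_r^{v,\mu,w(\cdot,t)}$, then evaluate the resulting traces in the orthonormal basis $\{b_h(\cdot,t)\}$ using $\partial_t w^r=-r\,\omega\odot w^r$. The only cosmetic difference is that the paper carries out the passage to the orthonormal basis by writing the spectral factorization $G=P^H\Lambda P$ explicitly and conjugating by $\Lambda^{-1/2}P$ via the cyclic property of the trace, whereas you phrase the same step as ``evaluate the trace in the basis where $G(t)=I$''; the computations of $\dot G_{hk}$, $\ddot G_{hh}$, and the expansion $|\dot G_{hk}|^2=r^2|A_{hk}+B_{hk}|^2$ with $B_{hk}=\overline{A_{kh}}$ are identical.
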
 

The proof of Proposition \ref{prop:derrivatives} is a long computation. For ease of the reader, we sketch it here and report the complete proof in Appendix \ref{sect:appendix}.

\begin{proof}[Sketch of the proof]
	First of all, by applying Jacobi Identity one computes the derivatives of $ f_r (t)$:
	$$f_r'(t)= - \frac{n+1}{2 nrN } \trace\left( (G_r^{v,\mu,w(x;t)})^{-1}\frac d{dt} G_r^{v,\mu,w(x;t)}\right),$$
	so that
	\begin{multline*}
		f''_r(t)= \frac{n+1}{2 nrN }\trace\left( (G_r^{v,\mu,w(x;t)})^{-1}\frac d{dt} G_r^{v,\mu,w(x;t)}(G_r^{v,\mu,w(x;t)})^{-1}\frac d{dt} G_r^{v,\mu,w(x;t)}\right)\\- \frac{n+1}{2 nrN }\trace((G_r^{v,\mu,w(x;t)})^{-1}\frac {d^2}{dt^2} G_r^{v,\mu,w(x;t)})\;.
	\end{multline*}
	To simplify the expression of $ f'_r (t) $ one then applies the orthogonal decomposition $$G_r^{v,\mu,w(x;t)}=P^H(t)\Lambda(t) P(t)=(\Lambda^{\frac{1}{2}}(t)P(t))^H(\Lambda^{\frac{1}{2}}(t)P(t)), $$
	which exists since $G_r^{v,\mu,w(x;t)}$ is Hermitian. Then, using the cyclic property $\trace(ABC)=\trace(BCA)$ of the trace operator, we have
	$$f_r'(t)= - \frac{n+1}{2 nrN } \trace\left((\Lambda^{-1/2}(t)P(t)) \frac d{dt} G_r^{v,\mu,w(x;t)} (\Lambda^{-1/2}(t)P(t))^H\right) .$$
	Making explicit the argument of the trace and exploiting orthogonality one obtains \eqref{D1formulasec}.
	
	The proof of \eqref{D2formlasec} is obtained by direct computation, exploiting the cyclic property and the linearity of the trace, and $ \frac d {dt} [G_r^{v,\mu,w(x;t)}]^{-1} = - [G_r^{v,\mu,w(x;t)}]^{-1} \frac d {dt} G_r^{v,\mu,w(x;t)} [G_r^{v,\mu,w(x;t)}]^{-1} $.
\end{proof}
The above results puts us in a position to prove the following:
\begin{proposition}[]\label{thm:MR4}
Let $T_{h,r}(\omega):=(u_{i(h,r)},\omega(x_{h,r}))_U$, with $h=1,\dots,N$, $x_{h,r}\in K$ (with $x_{h,r}\neq x_{k,r}$ if $h\neq k$), $r\in \N$, be a weighted asymptotically Fekete triangular array of vector measures with respect to the weight $w$, i.e.
\begin{equation} \label{eq:invertibilityVdm}
\lim_r |\det[T_{h,r}(q_k\odot w^r)]_{h,k=1,\dots,N}|^{1/(sl_r)}=\delta^{w}(K,U),
\end{equation} 
where $ i(h,r) \in \{ 1, \dots, s\} $ for any $ h = 1, \dots, r $, and $ r \in \N$. Then we have 
\begin{equation}\label{myfeketeconv}
T^{(r)}:=\frac 1 {N} \sum_{h=1}^{N} T_{h,r} \rightharpoonup^* \frac{1}{s (2\pi)^n} \sum_{l=1}^s(u_l,\cdot)_U \ddcn{V_{K,Q_l}^*},
\end{equation} 
where $Q_l:=-\log w_l$.
\end{proposition}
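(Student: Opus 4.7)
The plan is to adapt the Berman–Boucksom–Nyström perturbation strategy recalled in the introduction to the vector-valued setting, exploiting the block structure afforded by the vector point-mass currents of the statement to supply the concavity needed to exchange the limit in $r$ with the derivation in $t$. Fix a real test $\omega\in\mathscr{C}^0(K,U^\R)$, set $w(\cdot,t):=(w_1 e^{-t\omega_1},\ldots,w_s e^{-t\omega_s})$, and associate to the Fekete currents the natural $\puw{r}$-determining pair $\mu^{(r)}:=\frac{1}{N}\sum_h\delta_{x_{h,r}}$, $v^{(r)}(x_{h,r})=u_{i(h,r)}$; define
$$f_r(t):=-\frac{n+1}{2nrN}\log\det G_r^{v^{(r)},\mu^{(r)},w(\cdot,t)}.$$
From \eqref{eq:invertibilityVdm} together with Theorem \ref{thm:MR1} one checks that each sub-array $\{x_{h,r}:\,i(h,r)=l\}$ is scalar asymptotically Fekete for $w_l$, and Remark \ref{FeketeareBM} (extended to continuous positive perturbations of the weight) then grants the Bernstein–Markov property of $(v^{(r)},\mu^{(r)})$ for $w(\cdot,t)$ on a neighbourhood of $t=0$. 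Proposition \ref{prop:MR2} combined with Theorem \ref{thm:MR1} therefore produces
$$\lim_r f_r(t)=-\log\delta^{w(\cdot,t)}(K,U)=-\frac{1}{s}\sum_{l=1}^s\log\delta^{w_l e^{-t\omega_l}}(K)=:f(t),$$
whose derivative at zero, by \eqref{TD-energy} and \eqref{eq:derEner}, is $f'(0)=\frac{n+1}{sn(2\pi)^n}\sum_l\int_K\omega_l\,\ddcn{V_{K,Q_l}^*}$.

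The pivotal algebraic observation, specific to vector point-mass currents, is that $G_r^{v^{(r)},\mu^{(r)},w(\cdot,t)}$ is block diagonal: from $q_j(x)=x^{\beta(j)}u_{s(j)}$ and $v^{(r)}(x_{h,r})=u_{i(h,r)}$ the entry $(G_r)_{j,k}$ vanishes unless $s(j)=s(k)$, and non-degeneracy of the Vandermonde forces exactly $m_r$ indices $h$ with $i(h,r)=l$ per value of $l$. The $l$-th diagonal block $G_r^{(l)}(t)$ is then the scalar weighted Gram matrix for $(K,\,w_l e^{-t\omega_l},\,\mu_l^{(r)})$ with $\mu_l^{(r)}:=\frac{1}{m_r}\sum_{h:\,i(h,r)=l}\delta_{x_{h,r}}$, so $f_r(t)=\frac{1}{s}\sum_{l=1}^s f_r^{(l)}(t)$, each $f_r^{(l)}$ being the scalar free-energy functional of \cite{BeBoNy11}. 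A Cauchy–Binet expansion writes $\det G_r^{(l)}(t)$ as a positive integral of $e^{-2rt\,\Omega(\boldsymbol x)}$ against a fixed positive measure, so $\log\det G_r^{(l)}$ is convex in $t$ by Hölder's inequality and each $f_r^{(l)}$, hence $f_r$, is concave.

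Concavity, pointwise convergence and differentiability of $f$ at $0$ enable Lemma 6.6 of \cite{BeBo10}, giving $f_r'(0)\to f'(0)$. Using Proposition \ref{prop:derrivatives} and the block decomposition,
$$f_r'(0)=\frac{n+1}{sn}\sum_{l=1}^s\int_K\frac{B_{w_l,r}(x,\mu_l^{(r)})}{m_r}\omega_l(x)\,d\mu_l^{(r)}(x),$$
while $T^{(r)}(\omega)=\frac{1}{s}\sum_l\int_K\omega_l\,d\mu_l^{(r)}$. Applied to each asymptotically Fekete $\mu_l^{(r)}$, both the scalar strong Bergman asymptotic \eqref{sba-original} and the scalar weak-$^*$ convergence of Fekete measures yield the common limit $\frac{1}{(2\pi)^n}\int_K\omega_l\,\ddcn{V_{K,Q_l}^*}$ for the $l$-th term, so $\frac{n+1}{n}T^{(r)}(\omega)-f_r'(0)\to 0$. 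Therefore
$$T^{(r)}(\omega)\longrightarrow \frac{n}{n+1}f'(0)=\frac{1}{s(2\pi)^n}\sum_{l=1}^s\int_K\omega_l\,\ddcn{V_{K,Q_l}^*},$$
which is exactly the action of the right-hand side of \eqref{myfeketeconv} on $\omega$; extending to $\mathscr{C}^0(K,U)$ by $\R$-linearity completes the proof. The main obstacle addressed by the argument is the concavity of $f_r$: without the block-diagonal decomposition specific to point-mass currents, establishing it is unclear, and this is precisely what motivates Conjecture \ref{thm:MR3} in the general case.
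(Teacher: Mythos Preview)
Your proof reaches the right conclusion, but the logical structure mixes two independent arguments in a way that creates both redundancy and a gap. The block-diagonal observation is the decisive step: once you know that each sub-array $\{x_{h,r}:i(h,r)=l\}$ is scalar asymptotically Fekete for $w_l$ (which does follow from \eqref{eq:invertibilityVdm} and Theorem \ref{thm:MR1} via the factor-wise bound $|\det V_l|^{1/\ell_r}\le\delta^{w_l,r}(K)$), the scalar result \eqref{Feketeasympt} applied to each $\mu_l^{(r)}$ already gives $T^{(r)}(\omega)=\frac{1}{s}\sum_l\int\omega_l\,d\mu_l^{(r)}\to\frac{1}{s(2\pi)^n}\sum_l\int\omega_l\,\ddcn{V_{K,Q_l}^*}$ directly, and you are done. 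Everything involving $f_r$, concavity and Lemma~6.6 is then superfluous; moreover, in that redundant part your claim that Remark \ref{FeketeareBM} ``extends to continuous positive perturbations of the weight'' is not justified---asymptotically Fekete arrays for $w$ need not be Bernstein--Markov for $w(\cdot,t)$ when $t\neq 0$---so your route to $\lim_r f_r(t)=f(t)$ for $t\neq 0$ (hence to hypothesis (h3) of the lemma) has a gap. You also overlook that for interpolation arrays $B_{w_l,r}(\cdot,\mu_l^{(r)})\equiv m_r$ on $\mathrm{supp}\,\mu_l^{(r)}$, so in fact $f_r'(0)=\frac{n+1}{n}T^{(r)}(\omega)$ \emph{exactly}, making the ``difference tends to zero'' step unnecessary.

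The paper deliberately takes a different route, precisely to avoid invoking \eqref{Feketeasympt} as a black box (see the remark preceding the proposition). It works entirely in the vector formalism: an explicit Lagrange-type orthonormal basis $b_{h,r}(x,t)=\sqrt{N}\,w^{-r}(x_{h,r},t)\ell_{h,r}(x)u_{i(h,r)}$ is built, and Proposition \ref{prop:derrivatives} is applied directly to obtain $f_r'(t)=\frac{n+1}{n}T^{(r)}(\omega)$ (an algebraic identity) and $f_r''(t)\le 0$ (via explicit cancellation of the first two terms in \eqref{D2formlasec} plus Parseval for the third). Hypothesis (h3) is obtained not by pointwise convergence but by comparison with optimal vector measures, and only the computation of $\phi'(0)$ at the very end appeals to the scalar theory. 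This buys a proof whose template---every step except the verification of concavity---carries over unchanged to the setting of Conjecture \ref{thm:MR3}; your block reduction, while shorter once one accepts the scalar results, does not illuminate the general case.
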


\begin{proof}
Without loss of generality, for $ r $ large enough, we may assume that 
$$ \{ i(1,r), \ldots, i(N,r)\} = \{ \overbrace{1, \ldots, 1}^{\text{$m_r$-terms}}, \ldots, \overbrace{s, \ldots, s}^{\text{$m_r$-terms}} \} .$$
For, note that \eqref{eq:invertibilityVdm} implies that the matrix $ V = [T_{h,r}(q_k\odot w^r)]_{h,k=1}^N $ is full rank. Indeed, recall from \eqref{eqn:basisq} that $ q_k (x) = x^{\beta(k)} u_{s(k)}$, so that, up to a permutation of the columns, $ V $ is a block diagonal matrix. The number of columns of each block is $ m_r $, whereas the number $m(j) $ of rows of the $j$-th block coincides by construction with the number of repetitions of the index $ h \in \{ 1, \ldots, N \} $ such that $ i(h,r) = j $. It is then evident that $ \det V \ne 0 $ implies that $ m(j) = m_r $, for any $ j = 1, \ldots, s $.

Let us introduce
$$ \mu^{(r)} := \frac{1}{N} \sum_{h=1}^N \delta_{x_{h,r}}, \qquad v^{(r)}(x) := \begin{cases}
    u_{i(h,r)} \ &x = x_{h,r} \\
    0 \ &x \not\in \{x_{1,r}, \ldots, x_{N,r} \}
\end{cases} ,$$
and define $ g $ as at the beginning of Section \ref{sec:BM-Gram}.  Also, let $ \omega $ and $ w(x,t) $ be as in Proposition \ref{prop:derrivatives} and define $f_r$ analogously. Notice that, due to Remark \ref{FeketeareBM}, $(K,v^{(r)},\mu^{(r)},w)$ enjoys the weighted Bernstein Markov property \eqref{BM-prop} in $\mathcal M(K,U)$. Hence, by Proposition \ref{prop:MR2}, we have
\begin{equation}
\tag{h1}\label{H1}
\lim_r f_r(0) = \log \delta^w(K,U)\,.
\end{equation}

 If we denote by $ \ell_{h,r} (x) $ the fundamental Lagrange interpolating polynomial with respect to the point $ x_{h,r} $, then
\begin{equation}
    b_{h,r} (x,t) := \frac{\sqrt{N}}{w^r (x_{h,r},t)} \ell_{h,r} (x) u_{i(h,r)},\;h=1,2,\dots,N 
\end{equation}
is an orthonormal basis with respect to the scalar product induced by $ v^{(r)}, \mu^{(r)}, w^r $. Indeed, we compute
\begin{align*}
    (b_{h,r} \odot w^r, b_{\ell, r} \odot w^r)_{v^{(r)}, \mu^{(r)}} & = \frac{1}{N} \sum_{j=1}^N \frac{N w^{2r} (x_{j,r},t)}{w^r (x_{h,r},t) w^r (x_{\ell,r},t)} \overline{(u_{i(j,r)}, u_{i(h,r)})}_U (u_{i(j,r)}, u_{i(\ell,r)})_U \\
    & = \delta_{i(j,r), i(h,r)} \delta_{i(j,r), i(\ell,r)} \delta_{h,j} \delta_{\ell,j} = \delta_{h, \ell} . 
\end{align*}
Plugging this into \eqref{D1formulasec}, we obtain
\begin{align}
    f'_r (t) & = \frac{n+1}{nN} \Re \sum_{h=1}^N \frac{1}{N} \sum_{j=1}^N \frac{N w^{2r} (x_{j,r},t)}{w^{2r} (x_{h,r},t)} \ell_{h,r}^2 (x_{j,r}) \overline{(u_{i(j,r)}, u_{i(h,r)})}_U (u_{i(j,r)}, u_{i(h,r)} \odot \omega )_U \notag\\
    & = \frac{n+1}{nN} \Re \sum_{h=1}^N \sum_{j=1}^N \frac{w^{2r} (x_{j,r},t)}{w^{2r} (x_{h,r},t)} \omega_{i(h,r)} (x_{j,r}) \delta_{h,j} \delta_{i(j,r), i(h,r)} = \frac{n+1}{nN} \Re \sum_{h=1}^N \omega_{i(h,r)} (x_{h,r}) \notag\\
    & = \frac{n+1}{n} \Re  \frac{1}{N} \sum_{h=1}^N T_{h,r} (\omega) = \frac{n+1}{n}   T^{(r)} (\omega) .\label{f1t}
\end{align}
In order to specialize \eqref{D2formlasec} to our setting, we compute separately the terms appearing in it. The first term can be computed following the lines above, and gives
\begin{equation} \label{firstterm}
-\frac{r(n+1)}{nN } \Re \sum_{h=1}^N \int_K  g\left(b_h(x,t)\odot w^r(x;t),b_h(x,t)\odot w^r(x;t)\odot\omega^2\right) d\mu(x) =  -\frac{r(n+1)}{nN} \sum_{h=1}^N \omega_{i(h,r)}^2 (x_{h,r}) .
\end{equation}
We then compute the second term of eq. \eqref{D2formlasec}:
\small
\begin{align}
    & \begin{aligned}[t]
    \frac{r(n+1)}{nN}\Re \sum_{h=1}^N \sum_{k=1}^N \Bigg(  \int_K  g\left(b_h(x,t)\odot w^r(x;t)\odot \omega,b_k(x,t)\odot w^r(x;t)\right) d\mu(x) \\
		\cdot \int_K  g\left(b_h(x,t)\odot w^r(x;t),b_k(x,t)\odot w^r(x;t)\odot \omega\right) d\mu(x) \Bigg)
  \end{aligned}\notag
  \\ & \begin{aligned}[t]
      = \frac{r(n+1)}{nN}\Re \sum_{h=1}^N \sum_{k=1}^N \Bigg(  \sum_{j_1 = 1}^N \frac{N w^{2r} (x_{j_1,r})}{N w^r (x_{h,r}) w^r (x_{k,r})} \overline{(u_{i(j_1,r)}, u_{i(h,r)}  \odot \omega )}_U (u_{i(j_1,r)}, u_{i(k,r)})_U \ell_{h,r} (x_{j_1, r}) \ell_{k,r} (x_{j_1,r}) \\
      \cdot \sum_{j_2 = 1}^N \frac{N w^{2r} (x_{j_2,r})}{N w^r (x_{h,r}) w^r (x_{k,r})} \overline{(u_{i(j_2,r)}, u_{i(h,r)})}_U (u_{i(j_2,r)}, u_{i(k,r)} \odot \omega )_U \ell_{h,r} (x_{j_2, r}) \ell_{k,r} (x_{j_2,r})
      \Bigg)
  \end{aligned}\notag
    \\ & \begin{aligned}[t]
      = \frac{r(n+1)}{nN}\Re \sum_{h=1}^N \sum_{k=1}^N \sum_{j_1 = 1}^N  \sum_{j_2 = 1}^N \frac{w^{2r} (x_{j_1,r}) w^{2r} (x_{j_2,r})}{w^{2r} (x_{h,r}) w^{2r} (x_{k,r})}  \omega_{i(h,r)} \omega_{i(k,r)}  \delta_{i(j_1,r),i(h,r)} \delta_{i(j_1,r), i(k,r)} \delta_{h,j_1} \delta_{k,j_1} \\ \delta_{i(j_2,r),i(h,r)}\delta_{i(j_2,r), i(k,r)} \delta_{h,j_2} \delta_{k,j_2}
  \end{aligned}\notag
  \\ & = \frac{r(n+1)}{nN}\sum_{h=1}^N\omega_{i(h,r)}^2\;.\label{secondterm}
\end{align}
\normalsize
Thus the sum of the first two terms vanishes. Finally, we estimate the third term of eq. \eqref{D2formlasec} by Parseval Inequality:
\begin{equation}\label{thirdterm}
\sum_h\Bigg(\sum_k \Big| \int_K  g\left(b_h(x,t)\odot w^r(x;t)\odot \omega,b_k(x,t)\odot w^r(x;t)\right) d\mu(x)\Big|^2\\-\|b_h(\cdot,t)\odot\omega\|_{w^r(\cdot;t),v,\mu}^2\Bigg)\leq 0\,.
\end{equation}
Summing equations \eqref{firstterm}, \eqref{secondterm}, and inequality \eqref{thirdterm}, we obtain
$f''_r(t)\leq 0$, i.e., 
\begin{equation}\label{H2}
\text{the functions }f_r\text{ are concave.}\tag{h2}
\end{equation}
Let us pick, for any $t\in\R$ and $r\in \N$ a vector measure $\nu^{(r,t)}\in \mathcal M(K,U)$ with representing vector field $\eta^{(r,t)}:=d\nu^{(r,t)}/d\|\nu^{(r,t)}\|$ that maximizes the functional $\mu\mapsto \det G^{\xi,\mu,w(\cdot,t)}_r$ (where $\xi=d\mu/d\|\mu\|$) among $\{\mu\in \mathcal M(K,U):\,\sup_{\omega\in \mathscr C^0(K,U)}|\mu(\omega)|/\|\omega\|_{K,U}\leq 1\}.$ Note that in the scalar case such measures are known in the literature as optimal measures \cite{BlBoLeWa10}, and have a strong connection with the theory of optimal experimental designs. Let us introduce the sequence of functions
$$\tilde f_r:=-\frac{n+1}{nrN}\log \det G^{\eta^{(r,t)},\nu^{(r,t)},w(\cdot,t)}_r.$$
Note that, for any $t$, one one hand $\tilde f_r(t)\leq f_r(t)$. On the other hand, repeating the second part of the proof of Prop. \ref{prop:MR2}, one gets $\liminf_r \tilde f_r(t) \geq \phi(t):=-\log \delta^{w(\cdot,t)}(K,U)$. Therefore we have
\begin{equation}
\label{H3}\tag{h3}
\liminf_r f_r(t)\geq \phi(t).
\end{equation}
Notice that the properties \eqref{H1}, \eqref{H2}, and \eqref{H3} are precisely the hypothesis of \cite[Lemma 6.6]{BeBo10}, whose conclusion is $\lim_r f_r'(0)=\phi'(0).$ From the scalar case it follows immediately that 
\begin{align*}
\phi'(0)=&\sum_{l=1}^s\frac d {dt}\left[\frac{1}{n(2 \pi)^n}\mathcal E(V_{K,Q_l+t\omega_l}^*,\max_j \log^+\|z_j\|)\right]_{t=0}=\frac{n+1}{sn(2\pi)^n}\sum_{l=1}^s\int_K\omega_i\ddcn{V_{K,Q}^*}\\
=& \frac{n+1}{sn(2\pi)^n}\sum_{l=1}^s\int_K(u_l,\omega)_U\ddcn{V_{K,Q}^*}.
\end{align*}
Finally, invoking \eqref{f1t}, we obtain
$$\frac{n}{n+1}\lim_r f_r'(0)=\frac{n}{n+1}\phi'(0)=\frac 1 {s(2\pi)^n}\sum_{l=1}^s\int_K(u_l,\omega)_U\ddcn{V_{K,Q}^*},$$
i.e., \eqref{myfeketeconv} holds true.
\end{proof}
We end this section by formulating the general statement of the strong Bergman asymptotics as a conjecture. 
\begin{conjecture}[Strong Bergman Asymptotics]\label{thm:MR3}
Let $(K,w,v,\mu)$ satisfy the weighted Bernstein Markov property \eqref{WBM-prop}. Let $b_1,\dots,b_{N}$ be any orthonormal basis of $\pu{r}$ with respect to the product induced by $(K,w,v,\mu)$. Then, introducing the sequence of of vector measures
$$T^{(r)}(\omega):=\int_K\Re\left( \omega,\frac{\sum_{h=1}^N\overline{(b_{h,w}(x),v)_U}b_{h,w}(x)}{N}\odot \bar{v}\right)_U d\mu(x),$$
we have
\begin{equation}
T^{(r)}\rightharpoonup^* \frac{1}{s(2 \pi)^n}\sum_{l=1}^s(\cdot,u_l)_U \ddcn{V_{K,Q_l}^*}\,,
\end{equation}
where $Q_l:=-\log w_l$ and $\ddcn{V_{K,Q}^*}$ is defined in Subsection \ref{sec:weightppt}.
The same holds true if $(K,v,\mu)$ is replaced by any Bernstein Markov sequence $(K,v^{(r)},\mu^{(r)})$.
\end{conjecture}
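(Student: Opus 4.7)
The approach follows the Berman--Boucksom--Nyström derivation of the scalar strong Bergman asymptotics \eqref{sba-original}, adapted to the vector-valued setting. First, I would observe that for every real-valued test field $\omega\in\mathscr C^0(K,U^\R)$, the functional $T^{(r)}$ equals, up to the universal factor $n/(n+1)$, the derivative at $t=0$ of the function
$$f_r(t):=-\frac{n+1}{2nrN}\log\det G_r^{v,\mu,w(\cdot,t)},\qquad w(x,t):=\bigl(w_1(x)e^{-t\omega_1(x)},\dots,w_s(x)e^{-t\omega_s(x)}\bigr),$$
already studied in Proposition \ref{prop:derrivatives}. Indeed, a componentwise inspection of \eqref{D1formulasec} together with the identity $(v,b_{h,w}\odot\omega)_U=(\omega,b_{h,w}\odot\bar v)_U$, which holds precisely because $\omega_i\in\R$, yields $T^{(r)}(\omega)=\tfrac{n}{n+1}f'_r(0)$. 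A short calculation based on the unitarity of orthonormal basis changes shows that this right-hand side does not depend on the orthonormal basis $\{b_h\}$ used to define $T^{(r)}$, confirming the well-posedness of the statement.

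The core of the proof then retraces the three hypotheses of \cite[Lemma 6.6]{BeBo10}. Proposition \ref{prop:MR2} yields $\lim_r f_r(0)=\log\delta^w(K,U)=-\phi(0)$, with $\phi(t):=-\log\delta^{w(\cdot,t)}(K,U)$. The lower-bound portion \eqref{byopt} of its proof uses only the definition of $\delta^{w,r}$ and not the Bernstein--Markov property at the perturbed weight, so for every admissible $t$ one also obtains $\liminf_r f_r(t)\ge\phi(t)$. Provided one can prove that each $f_r$ is concave in $t$, the convex-analytic lemma then forces $\lim_r f'_r(0)=\phi'(0)$. By Theorem \ref{thm:MR1}, $\phi(t)$ is the arithmetic mean of the $s$ scalar logarithmic transfinite diameters relative to the weights $w_l e^{-t\omega_l}$, so the scalar derivation formula \eqref{eq:derEner} applied componentwise gives
$$\phi'(0)=\frac{n+1}{s\,n(2\pi)^n}\sum_{l=1}^{s}\int_K \omega_l\,\ddcn{V^*_{K,Q_l}};$$
multiplying by $n/(n+1)$ and rewriting $\omega_l=(u_l,\omega)_U$ produces exactly the weak$^*$ limit claimed. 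The extension to a Bernstein--Markov sequence $(v^{(r)},\mu^{(r)})$ is automatic, since both Proposition \ref{prop:MR2} and the bound \eqref{byopt} are already formulated in that generality.

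The only genuine obstacle will be the concavity of $f_r$. For $s=1$ it is classical, and in the Fekete-array setting of Proposition \ref{thm:MR4} it emerges from the pointwise cancellations available when $b_{h,r}(x,t)$ is built from Lagrangian fundamental polynomials. In the general vector-valued case the middle cross term in formula \eqref{D2formlasec} has mixed signs, and neither a term-by-term Cauchy--Schwarz bound nor the Parseval argument applied to the third term alone seems sufficient to close the gap. I envisage three plausible routes around this difficulty: (i) recast concavity as a Brunn--Minkowski or Prékopa-type statement applied directly to the free-energy integral $Z_r^{v,\mu,w(\cdot,t)}$ via its variational characterisation; (ii) decompose $f_r$ as an arithmetic mean of $s$ scalar free energies plus an off-diagonal correction, then control the correction through the tensorised Bernstein--Markov estimate of Lemma \ref{lemmatensorBM}; or (iii) sidestep concavity entirely by first proving that $T^{(r)}$ is asymptotically independent of the Bernstein--Markov sequence chosen, and then reducing to Proposition \ref{thm:MR4} via an auxiliary asymptotically Fekete array supported on $\mu^{(r)}$-typical configurations. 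Route (iii) appears the most tractable; its main technical ingredient would be a quantitative concentration estimate for the diagonal of the vector-valued reproducing kernel associated with $G_r^{v,\mu,w}$.
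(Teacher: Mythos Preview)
The statement you are attempting to prove is precisely Conjecture~\ref{thm:MR3}: the paper does \emph{not} prove it. The authors explicitly say that ``the major issue in the extension of the above construction to the vector-valued framework $s>1$ relies in proving the concavity of the function $f_r$'' and that they ``are not able to overcome this impasse in full generality.'' Your proposal retraces exactly the strategy the paper outlines---identify $T^{(r)}(\omega)=\tfrac{n}{n+1}f_r'(0)$ via \eqref{D1formulasec}, establish hypotheses (h1), (h2), (h3) of \cite[Lemma~6.6]{BeBo10}, and differentiate $\phi(t)$ componentwise through Theorem~\ref{thm:MR1}---and you correctly isolate the concavity of $f_r$ as the single missing ingredient. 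In this sense your diagnosis agrees with the paper's.

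However, your proposal is not a proof: it is a reduction to an open problem together with three speculative routes, none of which is carried out. Route~(i) would require a vector-valued Pr\'ekopa-type inequality for the semidefinite form $g^\otimes$, which is not available in the literature and is not obviously true when $g$ has rank one; route~(ii) presupposes that the ``off-diagonal correction'' is $o(1)$ after the $\tfrac{n+1}{2nrN}$ normalisation, but the cross terms in \eqref{D2formlasec} are of the same order as the diagonal ones, so there is no a priori reason the correction is negligible; route~(iii) hinges on a ``quantitative concentration estimate for the diagonal of the vector-valued reproducing kernel,'' which is itself at least as hard as the conjecture (such an estimate would already imply the strong Bergman asymptotics directly). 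In short, you have accurately located the gap the authors themselves could not close, but none of your suggested workarounds is substantiated, and the statement remains conjectural.
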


\section{A specialization to polynomial differential forms} \label{sect:application}

\rev{In practical applications, e.g. in the construction of numerical schemes, a prominent role is played by the case $ U = \Lambda^k_\C $, so that $ \pu{r} = \mathscr{P}_{r,n} \Lambda^k_\C $: differential forms with polynomial coefficients. When $ k = 0 $ one retrieves usual polynomial interpolation, and the theory here treated is consolidated \cite{Le06}. For higher dimensional counterparts few has been said, due to the complexity of identifying unisolvent sets, see e.g.  \cite{ABR20}. 

Let us restate our results in this perspective, noticing that the space $ \mathscr{P}_{r,n} \Lambda^k_\C  $ has itself a structure of tensor-product space $ \mathscr{P}_{r,n}\C \otimes \Lambda^k_\C $, and similarly for the real counterparts, so we have
$$ {\ddim}_{\mathbb K} \mathscr{P}_{r,n} \Lambda^k_{\mathbb K} = {\ddim}_{\mathbb K} \mathscr{P}_{r,n} \mathbb K \cdot {\ddim}_{\mathbb K} \Lambda^k_{\mathbb K} = m_r \cdot s =: \binom{n+r}{r} \binom{n}{k} = N ,$$
where $\mathbb K$ is either $\R$ or $\C$. Note that in the following we denote by $\sum'$ the summation over increasing multiindices.}

Proposition \ref{thm:MR4} with $w(x)\equiv(1,\dots,1)^t$ in this setting read as:
\begin{corollary} \label{cor:specializationforms}
Let $T_{h,r}(\omega):= \omega^{\alpha(h,r)}(x_{h,r})$, for any $ \omega(x) = \sum_{|\alpha| =k }'\omega^{\alpha} d x^{\alpha} $, with $h=1,\dots,N$, where $ x_{h,r}\in K$ (with $x_{h,r}\neq x_{k,r}$ if $h\neq k$), $r\in \N$, be an asymptotically Fekete triangular array of currents of order zero, i.e.
\begin{equation} \label{eq:feketeforms}
\lim_r |\det[T_{h,r}(q_k)]_{h,k=1,\dots,N}|^{1/(s\ell_r)}=\delta(K,\Lambda^k_\C),
\end{equation} 
and $\alpha(h,r)\in\{\alpha\in \N^s=\N^{\binom{n}{k}}: |\alpha|=k\}$ for any $h=1,\dots,N$, and $r\in N$.
Then, for any $ \omega \in \mathscr{C}^0 (K, \Lambda^k) $, we have
$$ \frac{1}{N} \sum_{h=1}^N T_{h,r} (\omega) \longrightarrow {\sum_{|\alpha| =k}}' \int_{K} \omega^\alpha \ddcn{V_K^*} \quad \text{ for } r \to \infty.$$
\end{corollary}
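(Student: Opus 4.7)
The plan is to deduce this corollary directly from Proposition \ref{thm:MR4} by specializing the vectorized framework of Section \ref{sec:WTD} to the geometric setting of polynomial differential forms. First I would identify the abstract Hermitian space $U$ with $\Lambda^k_\C$ equipped with its canonical Euclidean inner product, so that $s = \ddim U = \binom{n}{k}$ and $N = m_r \binom{n}{k}$. The chosen orthonormal basis $\{u_l\}_{l=1}^s$ is the collection of basic covectors $dx^{\alpha}$ with $|\alpha|=k$ increasing, indexed by some fixed bijection $l \mapsto \alpha(l)$. For a $k$-form $\omega = {\sum_{|\alpha|=k}}'\omega^\alpha dx^\alpha$, orthonormality then yields the identity $(u_l, \omega(x))_U = \omega^{\alpha(l)}(x)$, which is the bridge between the abstract pairings in Proposition \ref{thm:MR4} and the componentwise evaluations used here.

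Next I would match the hypotheses. Choosing the index map $i(h,r) \in \{1,\ldots,s\}$ to be the unique index such that $u_{i(h,r)} = dx^{\alpha(h,r)}$, the currents rewrite as $T_{h,r}(\omega) = \omega^{\alpha(h,r)}(x_{h,r}) = (u_{i(h,r)}, \omega(x_{h,r}))_U$, which is exactly the form of the linear functionals appearing in the statement of Proposition \ref{thm:MR4}. Setting $w \equiv (1,\ldots,1)^t$ forces $Q_l \equiv 0$ for every $l$, so the weighted transfinite diameter $\delta^w(K,U)$ reduces to $\delta(K,\Lambda^k_\C)$, and each weighted extremal function $V_{K,Q_l}^*$ collapses to the ordinary pluricomplex extremal function $V_K^*$. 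Under this identification hypothesis \eqref{eq:feketeforms} coincides with hypothesis \eqref{eq:invertibilityVdm} of Proposition \ref{thm:MR4}, and the distinctness of the interpolation nodes $x_{h,r}$ is explicitly assumed in both statements.

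Finally I would invoke Proposition \ref{thm:MR4}. Its conclusion \eqref{myfeketeconv}, specialized to this setting, reads
$$\frac{1}{N}\sum_{h=1}^N T_{h,r}(\omega) \longrightarrow \frac{1}{s(2\pi)^n}\sum_{l=1}^s \int_K (u_l,\omega)_U \, \ddcn{V_K^*},$$
and translating $(u_l,\omega)_U = \omega^{\alpha(l)}$ while summing over the increasing multiindices produces ${\sum_{|\alpha|=k}}' \int_K \omega^\alpha \, \ddcn{V_K^*}$ (up to the universal normalization constants implicit in the chosen convention for the unweighted equilibrium measure). Since the corollary is a pure translation of notation from the abstract Hermitian framework to the concrete space of $k$-covectors, there is no real technical obstacle: the only point demanding care is the consistent bookkeeping between the orthonormal index $l$ and the increasing multiindex $\alpha(l)$, together with the verification that the hypothesis of distinctness of the nodes and the Fekete asymptotics carry over verbatim under the trivial weight $w\equiv 1$.
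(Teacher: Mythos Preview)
Your proposal is correct and follows exactly the paper's approach: the paper provides no separate proof for this corollary, introducing it only with the sentence ``Proposition \ref{thm:MR4} with $w(x)\equiv(1,\dots,1)^t$ in this setting reads as:'', and your argument is precisely this specialization, carried out with the appropriate bookkeeping between the abstract basis $\{u_l\}$ and the basic covectors $\{dx^\alpha\}$. Your parenthetical remark about the normalization constant $\frac{1}{s(2\pi)^n}$ is also apt, since the corollary as stated in the paper drops this factor relative to \eqref{myfeketeconv}.
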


In the context of polynomial differential forms, Lagrange interpolation is customarily obtained by replacing nodal evaluations with integral on $k$-manifolds. This yields a particular class of currents called \emph{weights} \cite{Bossavit} (we apologize for the unfortunate coincidence of nomenclature, which we inherit from the literature)
\begin{equation} \label{eq:integralcurrent}
T : \quad \omega \mapsto \int_{\mathcal{S}} \omega dx ,
\end{equation}
being $ \mathcal{S} $ an appropriate domain of integration and $ dx $ the $k$-dimensional Hausdorff measure restricted to $ \mathcal{S} $. That is, when $ \mathcal{S} $ is a $k$-rectifiable manifold, \eqref{eq:integralcurrent} is the \emph{current of integration} on $ \mathcal{S} $. In the literature, this class of currents is restricted to particular examples of rectifiable manifolds, such as $ k $-simplices or, in a slightly more general context, $k$-cells. Also, in weights-based interpolation, the functions $ w (x) $ are all taken as constant and equal to $ 1 $. This justifies the absence of such a term in Eq. \eqref{eq:feketeforms}, as all the $ s $ terms at the right hand side of \eqref{eq:transfinitediameter} then coincide. Notwithstanding the simplification on the shape of the domain $ \mathcal{S} $, the analysis of the numerical features of integral currents as in \eqref{eq:integralcurrent} is generally hard and mostly conjectural \cite{BruniThesis} and theoretical results such as rate of convergence of weights-based interpolation of differential forms are proven only when the dimension of the ambient space is $ 1 $, see \cite{BE23}. Nevertheless, the Fekete problem studied in Corollary \ref{cor:specializationforms} emerges in the framework of polynomial differential forms; in fact, bases associated with Fekete currents have small norm, hence they define a stable interpolator.

In the search for integral currents maximizing the Vandermonde determinant \eqref{eq:feketeforms}, currents represented by vector point-masses (i.e. $T(\omega)=\sum_{j=1}^M(v,\omega(x_j))_{\Lambda^k}\mu_j$, with $\|v_j\|_{\Lambda^k}=1$, $\mu_j>0$, and $\sum_j \mu_j=1$) arise naturally as limits of currents of integration \eqref{eq:integralcurrent} normalized through the measure $ | \mathcal{S} | $, see \cite[Section 4.4.2]{BE24}. In particular, it has been observed taht, already in the one-dimensional setting, such currents of integration tend to shrink their supports toward points. Such nodes are precisely Fekete nodes, and the current of integration can hence be represented by a pair given by a point (which basically acts on the polynomial part as evaluation) and a $k$-vector (which, roughly speaking, acts on the basis of the cotangent space at the point). Notice that the direction of the vector attached to the point need not be unique; this is however not completely unexpected, as whenever we integrate we are choosing a coordinate representation for differential forms (see, for instance, the discussion in \cite[Section $3.1$]{BBZ22}). We end the section with the following interesting questions.

\begin{openproblem*}
Do all Fekete currents for polynomial differential forms degenerate to vector point-masses? Can we find Fekete currents for polynomial differential forms that are $k$-rectifiable?
\end{openproblem*}

\appendix

\section{Details of the proof of Proposition \ref{prop:derrivatives}} \label{sect:appendix}

In the following we give a detailed proof of Proposition \ref{prop:derrivatives}, expanding computations. In particular, we show that under the hypotheses of such a proposition, we have
	\begin{equation}\label{D1formula}
	f'_r(t)=\frac{n+1}{nrN }\Re \sum_{h=1}^N \int_K g\left(b_{h}(x,t)\odot w^r(x;t),b_{h}(x,t)\odot w^r(x;t)\odot \omega(x)\right)d\mu(x)\;,
\end{equation}
and
	\begin{multline} 
		f''_r(t)=\frac{(n+1)r}{nN }\Bigg[\Re \sum_h\Bigg(-\int_K  g\left(b_h(x,t)\odot w^r(x;t),b_h(x,t)\odot w^r(x;t)\odot \omega^2\right) d\mu(x)\\
		+ \sum_k \Bigg(\int_K  g\left(b_h(x,t)\odot w^r(x;t)\odot \omega,b_k(x,t)\odot w^r(x;t)\right) d\mu(x) \\
		\cdot \int_K  g\left(b_h(x,t)\odot w^r(x;t),b_k(x,t)\odot w^r(x;t)\odot \omega\right) d\mu(x) \Bigg)\Bigg)\\
		+ \Bigg(\sum_h\sum_k \Big| \int_K  g\left(b_h(x,t)\odot w^r(x;t)\odot \omega,b_k(x,t)\odot w^r(x;t)\right) d\mu(x)\Big|^2\\-\sum_h\|b_h(\cdot,t)\odot\omega\|_{w^r(\cdot;t),v,\mu}^2\Bigg)\Bigg]\,,\label{D2formla}
		\end{multline}
where $\{b_{1}(x,t),\dots,b_{N}(x,t)\}$ is an orthonormal basis of $\pu{r}(K)$ with respect to the scalar product defined by $(K,w(\cdot,t),v,\mu).$ 

\begin{proof}[Proof of Proposition \ref{prop:derrivatives}]
	By Jacobi Identity it follows that
	$$f_r'(t)= - \frac{n+1}{2 nrN } \trace\left( (G_r^{v,\mu,w(x;t)})^{-1}\frac d{dt} G_r^{v,\mu,w(x;t)}\right),$$
	so that
	\begin{multline*}
		f''_r(t)= \frac{n+1}{2 nrN }\trace\left( (G_r^{v,\mu,w(x;t)})^{-1}\frac d{dt} G_r^{v,\mu,w(x;t)}(G_r^{v,\mu,w(x;t)})^{-1}\frac d{dt} G_r^{v,\mu,w(x;t)}\right)\\- \frac{n+1}{2 nrN }\trace((G_r^{v,\mu,w(x;t)})^{-1}\frac {d^2}{dt^2} G_r^{v,\mu,w(x;t)})\;.
	\end{multline*}
	Let $G_r^{v,\mu,w(x;t)}=P^H(t)\Lambda(t) P(t)=(\Lambda^{\frac{1}{2}}(t)P(t))^H(\Lambda^{\frac{1}{2}}(t)P(t))$ be the orthogonal decomposition of the Hermitian matrix $G_r^{v,\mu,w(x;t)}$. It follows that $(G_r^{v,\mu,w(x;t)})^{-1}=(\Lambda^{-1/2}(t)P(t))^H(\Lambda^{-1/2}(t)P(t))$, so that, using the cyclic property $\trace(ABC)=\trace(BCA)$ of the trace operator, we have
	$$f_r'(t)= - \frac{n+1}{2 nrN } \trace\left((\Lambda^{-1/2}(t)P(t)) \frac d{dt} G_r^{v,\mu,w(x;t)} (\Lambda^{-1/2}(t)P(t))^H\right)=:- \frac{n+1}{2 nrN } \trace A^{(1)}.$$
	On the other hand, again using $\trace(ABC)=\trace(BCA)$,
	\begin{equation}\label{formofD2}
		f''_r(t)= \frac{n+1}{nrN } \left(\trace (A^{(1)}A^{(1)})- \trace A^{(2)}\right),
	\end{equation}
	where 
	$$A^{(2)}:= (\Lambda^{-1/2}(t)P(t)) \frac {d^2}{dt^2} G_r^{v,\mu,w(x;t)} (\Lambda^{-1/2}(t)P(t))^H\;.$$
	We need to compute the trace of the matrices $A^{(1)},$ $A^{(1)}A^{(1)}$, and $A^{(2)}.$
	\begin{align*}
		&\frac d{dt} [G_r^{v,\mu,w(x;t)}]_{h,k}= \frac d{dt}\int_K g(q_h(x)\odot w^r(x;t),q_k(x)\odot w^r(x;t))d\mu(x)\\
		=&  \int_K  \left(g\left(q_h(x)\odot w^r(x;t),q_k(x)\odot \frac d {dt}w^r(x;t)\right)+g\left(q_h(x)\odot w^r(x;t)\odot \frac d {dt}w^r(x;t),q_k(x)\right)\right)d\mu(x).
	\end{align*}
	Since $\frac{d}{dt} w^r(x;t)= - r w^r(x;t)\odot \omega$ we can conclude that
	\begin{multline*}
		\frac d{dt} [G_r^{v,\mu,w(t)}]_{h,k}= - r \int_K  g\left(q_h(x)\odot w^r(x;t),q_k(x)\odot w^r(x;t)\odot \omega\right) d\mu(x)\\
		-r \int_Kg\left(q_h(x)\odot w^r(x;t)\odot\omega,q_k(x)\odot w^r(x;t)\right)d\mu(x).
	\end{multline*}
	Using the fact that the matrix $(\Lambda^{-1/2}P)$ is the change of basis transforming the basis given by the $q_h$'s in the basis $b_{i}(\cdot,t)$ orthonormal with respect to the product induced by $(K,w(x;t),v,\mu)$ , we get
	\begin{multline}\label{A1}
		A^{(1)}_{h,k}= -r \int_K  g\left(b_h(x,t)\odot w^r(x;t),b_k(x,t)\odot w^r(x;t)\odot \omega\right) d\mu(x)\\
		-r \int_Kg\left(b_h(x,t)\odot w^r(x;t)\odot\omega,b_k(x,t)\odot w^r(x;t)\right)d\mu(x)\;.
	\end{multline}
	Note that in particular we have
	\begin{equation}\label{A1diag}
		A^{(1)}_{h,h}= -2r \Re\int_K  g\left(b_h(x,t)\odot w^r(x;t),b_h(x,t)\odot w^r(x;t)\odot \omega\right) d\mu(x))\;.
	\end{equation}
	Thus $\trace A^{(1)}=- 2\Re \sum_{h=1}^N \int_K g\left(b_{h}(x,t)\odot w^r(x,t),b_{h}(x,t)\odot w^r(x,t)\odot \omega(x)\right)d\mu(x)$, and \eqref{D1formula} follows.
	
	Let us compute $\trace(A^{(2)})$. Using again $\frac{d}{dt} w^r(x;t)= - r w^r(x;t)\odot \omega$ we can write
	\begin{multline*}
		\frac {d^2}{dt^2} [G_r^{v,\mu,w(t)}]_{h,k}= r^2 \Bigg(\int_K  g\left(q_h(x)\odot w^r(x;t)\odot\omega,q_k(x)\odot w^r(x;t)\odot \omega\right) d\mu(x)\\
		+\int_K  g\left(q_h(x)\odot w^r(x;t),q_k(x)\odot w^r(x;t)\odot \omega\odot\omega\right) d\mu(x)\\
		+ \int_Kg\left(q_h(x)\odot w^r(x;t)\odot\omega,q_k(x)\odot w^r(x;t)\odot\omega\right)d\mu(x)\\
		+\int_Kg\left(q_h(x)\odot w^r(x;t)\odot\omega\odot\omega,q_k(x)\odot w^r(x;t)\right)d\mu(x)\Bigg).
	\end{multline*}
	Therefore we have
	\begin{align*}
		A^{(2)}_{h,h}=& \begin{aligned}[t]r^2 \Bigg(\int_K  g\left(b_{h}(x,t)\odot w^r(x,t)\odot\omega,b_{h}(x,t)\odot w^r(x,t)\odot \omega\right) d\mu(x)\\
			+\int_K  g\left(b_{h}(x,t)\odot w^r(x,t),b_{h}(x,t)\odot w^r(x,t)\odot \omega^2\right) d\mu(x)\\
			+ \int_K g\left(b_{h}(x,t)\odot w^r(x,t)\odot\omega,b_{h}(x,t)\odot w^r(x,t)\odot\omega\right)d\mu(x)\\
			+\int_K g\left(b_{h}(x,t)\odot w^r(x,t)\odot\omega^2,b_{h}(x,t)\odot w^r(x,t)\right)d\mu(x)\Bigg)\end{aligned}\\
		=&2r^2 \begin{aligned}[t] \Bigg(\int_K  g\left(b_{h}(x,t)\odot w^r(x,t)\odot\omega,b_{h}(x,t)\odot w^r(x,t)\odot \omega\right) d\mu(x)\\
			+\Re\int_K  g\left(b_{h}(x,t)\odot w^r(x,t),b_{h}(x,t)\odot w^r(x,t)\odot \omega^2\right) d\mu(x)\Bigg).\end{aligned}\;,
	\end{align*}
	so that
	\begin{multline}\label{traceA2}
		\trace A^{(2)}=2r^2 \Bigg(\sum_h\|b_h(\cdot,t)\odot\omega\|_{w^r(\cdot;t),v,\mu}^2 \\+\sum_h\Re\int_K  g\left(b_{h}(x,t)\odot w^r(x,t),b_{h}(x,t)\odot w^r(x,t)\odot \omega^2\right) d\mu(x)\Bigg).
	\end{multline}

	We now compute the trace of $A^{(1)}\times A^{(1)}$ using \eqref{A1}.
	\begin{align}
		&\trace(A^{(1)}\times A^{(1)})=\sum_h\sum_k A_{h,k}^{(1)}A_{k,h}^{(1)}=\sum_h\sum_k A_{h,k}^{(1)}\overline{A_{h,k}^{(1)}}=\sum_h\sum_k |A_{h,k}^{(1)}|^2\notag\\
		=& \begin{aligned}[t]r^2\sum_h\sum_k \Big| \int_K  g\left(b_h^{(t)}(x)\odot w^r(x;t),b_k(x,t)\odot w^r(x;t)\odot \omega\right) d\mu(x)\\
			+ \int_Kg\left(b_h(x,t)\odot w^r(x;t)\odot\omega,b_k(x,t)\odot w^r(x;t)\right)d\mu(x)\Big|^2
		\end{aligned}\notag\\
		=&\begin{aligned}[t]r^2\sum_h\sum_k \Big| \int_K  g\left(b_h(x,t)\odot w^r(x;t)\odot \omega,b_k(x,t)\odot w^r(x;t)\right) d\mu(x)\Big|^2\\
			+r^2\sum_h\sum_k \Big| \int_K  g\left(b_h(x,t)\odot w^r(x;t),b_k(x,t)\odot w^r(x;t)\odot \omega\right) d\mu(x)\Big|^2\\
			+2r^2\Re\sum_h\sum_k \Bigg(\int_K  g\left(b_h(x,t)\odot w^r(x;t)\odot \omega,b_k(x,t)\odot w^r(x;t)\right) d\mu(x) \\
			\cdot \int_K  g\left(b_h(x,t)\odot w^r(x;t),b_k(x,t)\odot w^r(x;t)\odot \omega\right) d\mu(x) \Bigg)\end{aligned}\notag\\
		=&\begin{aligned}[t]2r^2\sum_h\sum_k \Big| \int_K  g\left(b_h(x,t)\odot w^r(x;t)\odot \omega,b_k(x,t)\odot w^r(x;t)\right) d\mu(x)\Big|^2\\
			+2r^2\Re\sum_h\sum_k \Bigg(\int_K  g\left(b_h(x,t)\odot w^r(x;t)\odot \omega,b_k(x,t)\odot w^r(x;t)\right) d\mu(x) \\
			\cdot \int_K  g\left(b_h(x,t)\odot w^r(x;t),b_k(x,t)\odot w^r(x;t)\odot \omega\right) d\mu(x) \Bigg)\end{aligned}\;.\label{traceA1A1}
	\end{align}
	Therefore, subtracting \eqref{traceA2} to \eqref{traceA1A1}, we have
	\begin{multline}
		\trace (A^{(1)}\times A^{(1)})-\trace(A^{(2)})=\\
		2r^2\Re \sum_h\Bigg(-\int_K  g\left(b_h(x,t)\odot w^r(x;t),b_h(x,t)\odot w^r(x;t)\odot \omega^2\right) d\mu(x)\\
		+ \sum_k \Bigg(\int_K  g\left(b_h(x,t)\odot w^r(x;t)\odot \omega,b_k(x,t)\odot w^r(x;t)\right) d\mu(x) \\
		\cdot \int_K  g\left(b_h(x,t)\odot w^r(x;t),b_k(x,t)\odot w^r(x;t)\odot \omega\right) d\mu(x) \Bigg)\\
		+ 2r^2\Bigg(\sum_h\sum_k \Big| \int_K  g\left(b_h(x,t)\odot w^r(x;t)\odot \omega,b_k(x,t)\odot w^r(x;t)\right) d\mu(x)\Big|^2\\
		-\sum_h\|b_h(\cdot,t)\odot\omega\|_{w^r(\cdot;t),v,\mu}^2\Bigg)\,,
	\end{multline}
	from which, recalling \eqref{formofD2}, equation \eqref{D2formla} follows. 
\end{proof}

\bibliographystyle{abbrv}
\bibliography{bibliography}

\begin{thebibliography}{10}

\bibitem{AMRBook}
R.~Abraham, J.~E. Marsden, and T.~Ratiu.
\newblock {\em Manifolds, tensor analysis, and applications}, volume~75 of {\em
  Applied Mathematical Sciences}.
\newblock Springer-Verlag, New York, second edition, 1988.

\bibitem{ABR20}
A.~Alonso~Rodr\'{\i}guez, L.~Bruni~Bruno, and F.~Rapetti.
\newblock Towards nonuniform distributions of unisolvent weights for high-order
  {W}hitney edge elements.
\newblock {\em Calcolo}, 59(4):Paper No. 37, 29, 2022.

\bibitem{AFW}
D.~N. Arnold, R.~S. Falk, and R.~Winther.
\newblock Finite element exterior calculus, homological techniques, and
  applications.
\newblock {\em Acta Numer.}, 15:1--155, 2006.

\bibitem{BaBi14}
M.~Baran and L.~Bialas-Ciez.
\newblock H\"older continuity of the {G}reen function and {M}arkov brothers'
  inequality.
\newblock {\em Constr. Approx.}, 40(1):121--140, 2014.

\bibitem{BaBi13}
M.~Baran, L.~Bia{\l}as-Cie{\.z}, and B.~Mil{\'o}wka.
\newblock On the best exponent in {M}arkov inequality.
\newblock {\em Potential Anal.}, 38(2):635--651, 2013.

\bibitem{BaBlLeLu19}
T.~Bayraktar, T.~Bloom, N.~Levenberg, and C.~H. Lu.
\newblock Pluripotential theory and convex bodies: large deviation principle.
\newblock {\em Ark. Mat.}, 57(2):247--283, 2019.

\bibitem{BeTa82}
E.~Bedford and B.~A. Taylor.
\newblock A new capacity for plurisubharmonic functions.
\newblock {\em Acta Math.}, 149(1-2):1--40, 1982.

\bibitem{BeBo10}
R.~Berman and S.~Boucksom.
\newblock Growth of balls of holomorphic sections and energy at equilibrium.
\newblock {\em Invent. Math.}, 181(2):337--394, 2010.

\bibitem{BeBoNy11}
R.~Berman, S.~Boucksom, and D.~Witt~Nystr\"{o}m.
\newblock Fekete points and convergence towards equilibrium measures on complex
  manifolds.
\newblock {\em Acta Math.}, 207(1):1--27, 2011.

\bibitem{BeR19}
R.~J. Berman.
\newblock Statistical mechanics of interpolation nodes, pluripotential theory
  and complex geometry.
\newblock {\em Ann. Polon. Math.}, 123(1):71--153, 2019.

\bibitem{Bl97}
T.~Bloom.
\newblock Orthogonal polynomials in {$\bold C^n$}.
\newblock {\em Indiana Univ. Math. J.}, 46(2):427--452, 1997.

\bibitem{BlBoLeWa10}
T.~Bloom, L.~Bos, N.~Levenberg, and S.~Waldron.
\newblock On the convergence of optimal measures.
\newblock {\em Constr. Approx.}, 32(1):159--179, 2010.

\bibitem{BlBoCaLe12}
T.~Bloom, L.~P. Bos, J.-P. Calvi, and N.~Levenberg.
\newblock Polynomial interpolation and approximation in {$\Bbb C^d$}.
\newblock {\em Ann. Polon. Math.}, 106:53--81, 2012.

\bibitem{BlLe03}
T.~Bloom and N.~Levenberg.
\newblock Weighted pluripotential theory in {$\bold C^N$}.
\newblock {\em Amer. J. Math.}, 125(1):57--103, 2003.

\bibitem{BlLe10}
T.~Bloom and N.~Levenberg.
\newblock Transfinite diameter notions in {$\Bbb C^N$} and integrals of
  {V}andermonde determinants.
\newblock {\em Ark. Mat.}, 48(1):17--40, 2010.

\bibitem{BlLePiWi15}
T.~Bloom, N.~Levenberg, F.~Piazzon, and F.~Wielonsky.
\newblock Bernstein-{M}arkov: a survey.
\newblock {\em Dolomites Res. Notes Approx.}, 8(2, Special Issue):75--91, 2015.

\bibitem{BoCaLeSoVi11}
L.~Bos, J.-P. Calvi, N.~Levenberg, A.~Sommariva, and M.~Vianello.
\newblock Geometric weakly admissible meshes, discrete least squares
  approximations and approximate {F}ekete points.
\newblock {\em Math. Comp.}, 80(275):1623--1638, 2011.

\bibitem{BoDeSoVi11}
L.~Bos, S.~De~Marchi, A.~Sommariva, and M.~Vianello.
\newblock Weakly admissible meshes and discrete extremal sets.
\newblock {\em Numer. Math. Theory Methods Appl.}, 4(1):1--12, 2011.

\bibitem{BoPiVi20}
L.~Bos, F.~Piazzon, and M.~Vianello.
\newblock Near {G}-optimal {T}chakaloff designs.
\newblock {\em Comput. Statist.}, 35(2):803--819, 2020.

\bibitem{Bossavit}
A.~Bossavit.
\newblock {\em Computational electromagnetism}.
\newblock Electromagnetism. Academic Press, Inc., San Diego, CA, 1998.
\newblock Variational formulations, complementarity, edge elements.

\bibitem{BruniThesis}
L.~Bruni~Bruno.
\newblock {\em Weights as degrees of freedoom for high order Whitney finite
  elements}.
\newblock PhD thesis, University of Trento, 2022.
\newblock Available at: \url{https://theses.hal.science/tel-04067201/}.

\bibitem{BE24}
L.~Bruni~Bruno and W.~Erb.
\newblock The {F}ekete problem in segmental polynomial interpolation, 2024.
\newblock preprint, available at \url{https://arxiv.org/pdf/2403.09378}.

\bibitem{BE23}
L.~Bruni~Bruno and W.~Erb.
\newblock Polynomial interpolation of function averages on interval segments.
\newblock {\em SIAM J. Numer. Anal.}, 62(4), 2024.

\bibitem{BBZ22}
L.~Bruni~Bruno and E.~Zampa.
\newblock Unisolvent and minimal physical degrees of freedom for the second
  family of polynomial differential forms.
\newblock {\em ESAIM Math. Model. Numer. Anal.}, 56(6):2239--2253, 2022.

\bibitem{Federer}
H.~Federer.
\newblock {\em Geometric measure theory}, volume Band 153 of {\em Die
  Grundlehren der mathematischen Wissenschaften}.
\newblock Springer-Verlag New York, Inc., New York, 1969.

\bibitem{Flanders}
H.~Flanders.
\newblock {\em Differential forms with applications to the physical sciences}.
\newblock Dover Books on Advanced Mathematics. Dover Publications, Inc., New
  York, second edition, 1989.

\bibitem{Hiemstra}
R.~R. Hiemstra, D.~Toshniwal, R.~H.~M. Huijsmans, and M.~I. Gerritsma.
\newblock High order geometric methods with exact conservation properties.
\newblock {\em J. Comput. Phys.}, 257(part B):1444--1471, 2014.

\bibitem{KlM91}
M.~Klimek.
\newblock {\em Pluripotential theory}, volume~6 of {\em London Mathematical
  Society Monographs. New Series}.
\newblock The Clarendon Press, Oxford University Press, New York, 1991.
\newblock Oxford Science Publications.

\bibitem{Le06}
N.~Levenberg.
\newblock Approximation in {$\Bbb C^N$}.
\newblock {\em Surv. Approx. Theory}, 2:92--140, 2006.

\bibitem{NormSurvey}
N.~Levenberg.
\newblock Weighted pluripotential theory results of {B}erman-{B}oucksom, 2010.

\bibitem{Nedelec86}
J.-C. N\'{e}d\'{e}lec.
\newblock A new family of mixed finite elements in {${\bf R}^3$}.
\newblock {\em Numer. Math.}, 50(1):57--81, 1986.

\bibitem{Pi17}
F.~Piazzon.
\newblock Some results on the rational {B}ernstein-{M}arkov property in the
  complex plane.
\newblock {\em Comput. Methods Funct. Theory}, 17(3):405--443, 2017.

\bibitem{Pi19}
F.~Piazzon.
\newblock Laplace {B}eltrami operator in the {B}aran metric and pluripotential
  equilibrium measure: the ball, the simplex, and the sphere.
\newblock {\em Comput. Methods Funct. Theory}, 19(4):547--582, 2019.

\bibitem{RapettiBossavit}
F.~Rapetti and A.~Bossavit.
\newblock Whitney forms of higher degree.
\newblock {\em SIAM J. Numer. Anal.}, 47(3):2369--2386, 2009.

\bibitem{Warner}
F.~W. Warner.
\newblock {\em Foundations of differentiable manifolds and {L}ie groups},
  volume~94 of {\em Graduate Texts in Mathematics}.
\newblock Springer-Verlag, New York-Berlin, 1983.
\newblock Corrected reprint of the 1971 edition.

\bibitem{Za75}
V.~P. Zaharjuta.
\newblock Transfinite diameter, \v{C}eby\v{s}ev constants, and capacity for
  compacta in $\mathbb{C}^n$.
\newblock {\em Mathematics of the USSR-Sbornik}, 25(3):350--364, 1975.

\bibitem{Za12}
V.~Zakharyuta.
\newblock Transfinite diameter, chebyshev constants, and capacities in
  $\mathbb{C}^n$.
\newblock {\em Annales Polonici Mathematici}, 106(1):293--313, 2012.

\bibitem{ZeZe10}
O.~Zeitouni and S.~Zelditch.
\newblock Large deviations of empirical measures of zeros of random
  polynomials.
\newblock {\em Int. Math. Res. Not. IMRN}, (20):3935--3992, 2010.

\end{thebibliography}

\end{document}